\begin{document}
\title{{\bf Cohomology groups of sections of homogeneous line
bundles over a toroidal group}}
\author{Yukitaka Abe}
\date{ }
\maketitle

\noindent
{\bf Abstract}\\
We completely determine cohomology groups of sections of homogeneous
line bundles over a toroidal group. 
\footnote{
{\bf Mathematics Subject Classification (2010):}
32L10 (primary), 32M05 (secondary)}\ 
\footnote{
{\bf keywords:} Cohomology groups, Homogeneous line bundles, Toroidal groups}\ 

\newtheorem{definition}{Definition}[section]

\newtheorem{remark}[definition]{Remark}

\newtheorem{lemma}[definition]{Lemma}

\newtheorem{theorem}[definition]{Theorem}

\newtheorem{proposition}[definition]{Proposition}

\newtheorem{corollary}[definition]{Corollary}

\newtheorem{example}[definition]{Example}

\section{Introduction}
A toroidal group is a connected complex Lie group without
non-constant holomorphic functions. Such a group appears
as the steinizer of a complex Lie group (\cite{morimoto1},
\cite{morimoto2}). It is well-known that a toroidal group
is commutative. Then it is isomorphic to a quotient group
${\mathbb C}^n/\Gamma $ of ${\mathbb C}^n$ by a discrete
subgroup $\Gamma $. A complex torus is a compact toroidal
group. By the Remmert-Morimoto theorem (\cite{kopfermann}
and \cite{morimoto2}) every connected commutative complex
Lie group is isomorphic to the product of copies of ${\mathbb C}$,
copies of ${\mathbb C}^{*} = {\mathbb C} \setminus \{ 0 \}$
and a toroidal group. A toroidal group plays an important
role in the study of complex Lie groups. Moreover the relations
to the number theory are known (cf. \cite{abe2}, \cite{abe3}).
\par
Let $X$ be a toroidal group. The cohomology groups
$H^p(X,{\mathcal O})$ $(p\geq 1)$ were completely determined
by Kazama (\cite{kazama}).
The next problem is to determine $H^p(X,{\mathcal O}(L))$ for
any holomorphic line bundle $L$ over $X$. If $X$ is compact, i.e. a complex
torus, then we know the cohomology groups
$H^p(X,{\mathcal O}(L))$ for any 
$L$. The general result is known as the Index theorem.
In this case we need tools which are valid for compact
K\"ahler manifolds. Unfortunately, they are not applicable to
non-compact toroidal groups. 
\par
In this paper we consider homogeneous line bundles $L$ over
a toroidal group $X = {\mathbb C}^n/\Gamma $ with
${\rm rank}\,\Gamma = n+m$. It is known that $X$ has the structure
of principal $({\mathbb C}^{*})^{n-m}$-bundle
$\mu : X \longrightarrow {\mathbb T}$ over an $m$-dimensional
complex torus ${\mathbb T}$. We determine the cohomology
groups $H^p(X,{\mathcal O}(L))$ for $p\geq 1$. The following
three cases occur when $L$ is not analytically trivial:\\
(1) $H^p(X,{\mathcal O}(L))=0$ for $p\geq 1$,\\
(2) $H^p(X,{\mathcal O}(L))\cong H^p({\mathbb T},{\mathcal O})$
for $p\geq 1$,\\
(3) $H^p(X,{\mathcal O}(L))$ is a non-Hausdorff Fr\'echet space,
then, of infinite dimension for $1\leq p \leq m$\\
(Theorem 9.1).
It seems to us that this result is the first step beyond the case
of $H^p(X,{\mathcal O})$.
When $X$ is a complex torus, we know
$H^p(X,{\mathcal O}(L))=0$ $(p\geq 1)$ for any homogeneous line
bundle $L$ over $X$ which is not analytically trivial. Our
method gives another proof of this fact. We give examples which
show that each of the above cases really occurs.\par
The paper is organized as follows. In Section 2 we state standard 
coordinates and real coordinates of ${\mathbb C}^n$ and the
relation between them. In Section 3 we collect some facts about
homogeneous line bundles. In Section 4 we introduce sheaves
${\mathcal F}^{r,s}$ and ${\mathcal F}^{r,s}(L)$, and give
Dolbeault-Kazama isomorphisms. Every $\Gamma $-periodic $C^{\infty}$
function on ${\mathbb C}^n$ which is holomorphic with respect
to the last $n-m$ variables has the Fourier expansion. In Section 5
we explain the properties of such Fourier expansions and their
derivatives. The spaces $H^0(X,{\mathcal F}^{r,s})$ and
$H^0(X,{\mathcal F}^{r,s}(L))$ are isomorphic as Fr\'echet
spaces. Using these isomorphisms, we translate $\overline{\partial}$-equations
to certain equations of $\Gamma $-periodic differential forms
in Section 6. In Section 7 we obtain formal solutions of the
above equations. Then it suffices to consider the convergence of
formal solutions. We give conditions for the convergence in
Section 8. Finally we prove the main result in Section 9. In
the last section we construct examples.

\section{Preliminaries}
Let $X = {\mathbb C}^n/\Gamma $ be a toroidal group with
${\rm rank}\,\Gamma = n+m$. We use standard coordinates
$z = (z_1, \dots ,z_n)$ of ${\mathbb C}^n$ for $X$ and a period matrix $P$ of
$X$ in the first normal form as
$$P = 
\begin{pmatrix}
I_n & S
\end{pmatrix},\quad
S = 
\begin{pmatrix}
S_1 \\
S_2
\end{pmatrix}
,$$
where $I_n$ is the unit matrix of degree $n$ and $S$ is a complex
$(n,m)$-matrix with $\det ({\rm Im}(S_1)) \not= 0$.
The matrix $S$ satisfies the irrationality condition
$$\text{
for any $\tau = (\tau _1,\dots ,\tau _n) \in
{\mathbb Z}^n \setminus \{ 0 \}$ we have
$\tau S \notin {\mathbb Z}^m$}
\leqno{(IS)}$$
in standard coordinates because $X$ is a toroidal group
(cf. \cite{abe-kopfermann}).
We write $\begin{pmatrix}
I_n & S
\end{pmatrix} = (e_1, \dots ,e_n,s_1,\dots ,s_m)$.
For $j=m+1, \dots ,n$ we set $s_j := \sqrt{-1}e_j$. Then 
$\{ e_1, \dots ,e_n,s_1,\dots ,s_n \}$ is a basis of
${\mathbb C}^n$ over ${\mathbb R}$. We take real coordinates
$t = (t_1, \dots ,t_{2n})$ of ${\mathbb C}^n$ defined by
$$\sum _{i=1}^{n}z_ie_i = \sum _{i=1}^{n} t_ie_i +
\sum _{i=1}^{n} t_{n+i}s_i.$$
A column vector $s_j$ is written as $s_j = {}^t(s_{1j},\dots ,
s_{nj})$. We set $A := (a_{ij})_{1\leq i,j \leq n}$ and
$B := (b_{ij})_{1\leq i,j \leq n}$, where $a_{ij} :=
{\rm Re}(s_{ij})$ and $b_{ij} := {\rm Im}(s_{ij})$.
We note that $a_{ij} = 0$ if $1 \leq i \leq n$ and
$m+1 \leq j \leq n$. Let $C = (c_{ij})_{1 \leq i,j \leq n}
:= B^{-1}$. Then we have $c_{ij} = \delta _{ij}$ for
$1 \leq i \leq n$ and $m+1 \leq j \leq n$.
We write $z_i = x_i + \sqrt{-1}y_i$, $i=1, \dots ,n$ as usual.
Then we have the relations
\begin{equation}
t_j = x_j - \sum _{k=1}^n \left(\sum _{\ell = 1}^n
a_{j\ell }c_{\ell k}\right) y_k\quad
\text{and}\quad t_{n+j} = \sum _{k=1}^nc_{jk}y_k
\end{equation}
for $j=1, \dots , n$. Therefore we obtain
\begin{equation}
\frac{\partial }{\partial \overline{z}_j} = \frac{1}{2}
\left\{ \frac{\partial }{\partial t_j} + \sqrt{-1}\left(
- \sum _{k=1}^n \sum _{\ell = 1}^n a_{k\ell } c_{\ell j}
\frac{\partial }{\partial t_k} + \sum _{k=1}^n c_{kj}
\frac{\partial }{\partial t_{n+k}}\right) \right\}
\end{equation}
for $j=1, \dots ,n$.\par
Let ${\mathbb T} = {\mathbb C}^m/\Lambda $ be an $m$-dimensional
complex torus, where $\Lambda $ is a discrete subgroup of
${\mathbb C}^m$ with period matrix $\begin{pmatrix} I_m & S_1
\end{pmatrix}$. The toroidal group $X$ has the structure of
principal $({\mathbb C}^{*})^{n-m}$-bundle
$\mu : X \longrightarrow {\mathbb T}$ over ${\mathbb T}$ by
the projection $(z_1, \dots ,z_n) \longmapsto (z_1, \dots ,z_m)$.

\section{Homogeneous line bundles}
For any $x$ in a toroidal group $X = {\mathbb C}^n/\Gamma $ we
define a translation $T_x : X \longrightarrow X,\ y\mapsto
y + x$.
\begin{definition}
{\rm
A holomorphic line bundle $L$ over $X$ is said to be homogeneous
if $L$ and $T_x^{*}L$ is analytically isomorphic for any $x \in X$.
}
\end{definition}
A homomorphism $\rho : \Gamma \longrightarrow {\mathbb C}^{*}$
is called a (1-dimensional) representation of $\Gamma $. Since it
is considered as a factor of automorphy, it defines a holomorphic
line bundle over $X$. It is obvious by definition that if $L$
is a holomorphic line bundle given by a representation of $\Gamma $,
then it is homogeneous. When $X$ is a complex torus, it is
well-known that for a holomorphic line bundle $L$ over $X$ the
following statements are equivalent:\\
(1) $L$ is topologically trivial.\\
(2) $L$ is given by a representation of $\Gamma $.\\
(3) $L$ is homogeneous.\\
The above equivalence does not hold for a toroidal group in general.
However we have the following proposition.

\begin{proposition}[Abe \cite{abe1}]
Let $L$ be a holomorphic line bundle over a toroidal group
$X = {\mathbb C}^n/\Gamma $. Then $L$ is homogeneous if and only if
it is given by a representation of $\Gamma $.
\end{proposition}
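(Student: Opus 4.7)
The ``if'' direction is immediate: the factor of automorphy of the line bundle associated to a representation $\rho:\Gamma\to\mathbb{C}^{*}$ is $f(\gamma,z)=\rho(\gamma)$, manifestly independent of $z$, so $T_x^{*}L$ has the same factor as $L$ for every $x\in X$. For ``only if'', I represent $L$ by a factor of automorphy $f:\Gamma\times\mathbb{C}^n\to\mathbb{C}^{*}$, which exists because $\mathbb{C}^n$ is simply connected and Stein. Homogeneity is then equivalent to the condition that for every $w\in\mathbb{C}^n$ there exists $h_w\in\mathcal{O}^{*}(\mathbb{C}^n)$ with
\[
\frac{f(\gamma,z+w)}{f(\gamma,z)}=\frac{h_w(z+\gamma)}{h_w(z)}. \qquad (\ast)
\]
Two solutions of $(\ast)$ differ by a $\Gamma$-periodic element of $\mathcal{O}^{*}(\mathbb{C}^n)$; because $X$ is toroidal and admits no non-constant holomorphic functions, such a function must be constant, so $h_w$ is unique up to a multiplicative scalar. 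Normalizing $h_w(0)=1$ (so that $h_0\equiv 1$) then makes $h_w(z)$ jointly holomorphic in $(w,z)$.

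My goal is to produce $g\in\mathcal{O}^{*}(\mathbb{C}^n)$ such that $f'(\gamma,z):=f(\gamma,z)\,g(z)/g(z+\gamma)$ depends only on $\gamma$; the cocycle identity for $f'$ then forces $\rho(\gamma):=f'(\gamma,0)$ to be a group homomorphism, and $L$ is the bundle given by $\rho$. To construct $g$, I would differentiate $(\ast)$ in $w$ at $w=0$ along a direction $v\in\mathbb{C}^n$; with $k_v(z):=\tfrac{d}{dt}h_{tv}(z)\big|_{t=0}\in\mathcal{O}(\mathbb{C}^n)$ this yields
\[
\partial_v\log f(\gamma,z)=k_v(z+\gamma)-k_v(z),
\]
exhibiting $\partial_v\log f$ as an explicit $\Gamma$-coboundary for every $v$. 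Comparison of mixed partials in $z$ shows that $\partial_u k_v-\partial_v k_u$ is $\Gamma$-periodic, hence a constant alternating form $C(u,v)$ on $\mathbb{C}^n$ by toroidal rigidity. Using the composition identity $T_{w_1+w_2}^{*}=T_{w_2}^{*}\circ T_{w_1}^{*}$ applied to the uniquely normalized family $\{h_w\}$ to extract the corresponding second-order consistency forces $C\equiv 0$. Then $k_v=\partial_v K$ for a holomorphic primitive $K\in\mathcal{O}(\mathbb{C}^n)$, and $g:=e^{-K}$ yields $f(\gamma,z)=\rho(\gamma)\,g(z+\gamma)/g(z)$, as required.

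The main obstacle I expect is the vanishing $C\equiv 0$: it is a second-order condition hidden in the composition structure of the $h_w$, and extracting it cleanly requires careful bookkeeping of the normalization. The toroidal hypothesis is used twice in essential ways---first to pin down $h_w$ uniquely, rigidifying the construction, and second to convert the periodic function $\partial_u k_v-\partial_v k_u$ into a constant---reducing the obstruction to a finite-dimensional problem that homogeneity can annihilate. Once the constant $C$ is killed, integration on the simply connected Stein manifold $\mathbb{C}^n$ and the cocycle identity for $f'$ deliver the representation $\rho$ and complete the proof.
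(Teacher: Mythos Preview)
The paper does not prove this proposition; it is quoted from \cite{abe1} without argument, so there is no proof in the paper to compare against. I therefore assess your attempt on its own.

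Your outline is reasonable, but the step you yourself flag as ``the main obstacle''---the vanishing of $C(u,v)=\partial_uk_v-\partial_vk_u$---is a genuine gap as written. Expanding the composition law $h_{w_1+w_2}(z)=h_{w_1}(z)\,h_{w_2}(z+w_1)/h_{w_2}(w_1)$ to second order (one derivative in each of $w_1,w_2$ at $0$) only reproduces the identity $(\partial_uk_v)(z)-(\partial_uk_v)(0)=(\partial_vk_u)(z)-(\partial_vk_u)(0)$, i.e.\ that $C$ is constant, which you already knew from $\Gamma$-periodicity; no amount of higher-order bookkeeping in $w_1,w_2$ isolates the constant itself. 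So ``second-order consistency of the $h_w$'' does not force $C\equiv 0$.

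What does kill $C$ is the cocycle identity for $f$, together with toroidality used a third time. Writing $k_j=\partial_jK+\tfrac12\sum_iC_{ij}z_i$ and integrating $\partial_j\log f(\gamma,z)=k_j(z+\gamma)-k_j(z)$ gives $\log f(\gamma,z)=K(z+\gamma)-K(z)+\tfrac12\sum_{i,j}C_{ij}\gamma_iz_j+s(\gamma)$. Feeding this into $f(\gamma+\gamma',z)=f(\gamma,z+\gamma')f(\gamma',z)$ and antisymmetrizing in $\gamma,\gamma'$ yields $\sum_{i,j}C_{ij}\gamma_i\gamma'_j\in 2\pi i\,\mathbb{Z}$ for all $\gamma,\gamma'\in\Gamma$. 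In particular each $\mathbb{C}$-linear functional $\ell_i(z):=(2\pi i)^{-1}\sum_jC_{ij}z_j$ maps $\Gamma$ into $\mathbb{Z}$, so ${\bf e}(\ell_i)$ descends to a holomorphic function on $X$; since $X$ is toroidal this function is constant, hence $\ell_i=0$ and $C=0$. With this replacement your argument goes through. (The joint holomorphicity of $(w,z)\mapsto h_w(z)$ also needs a word, though it is routine once uniqueness is established.)
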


Throughout this paper we assume that $L$ is a homogeneous line
bundle over a toroidal group $X = {\mathbb C}^n/\Gamma $ given by
a representation $\rho : \Gamma \longrightarrow {\mathbb C}^{*}$ of
$\Gamma $. Then there exists a homomorphism $d : \Gamma 
\longrightarrow {\mathbb C}$ such that $\rho (\gamma ) = {\bf e}(
d(\gamma ))$ $(\gamma \in \Gamma )$, 
where ${\bf e}(* ) = \exp (2\pi \sqrt{-1}*)$.

\begin{lemma}
The homomorphism $d : \Gamma \longrightarrow {\mathbb C}$ is
equivalent to a homomorphism $\tilde{d} : \Gamma \longrightarrow
{\mathbb R}$ with $\tilde{d}(e_{m+j}) = 0$ for $j=1, \dots , n-m$
as summands of automorphy.
\end{lemma}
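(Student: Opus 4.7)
My plan is to construct a $\mathbb{C}$-linear form $\ell(z) = \sum_{i=1}^n c_i z_i$ on $\mathbb{C}^n$ and set $\tilde d(\gamma) := d(\gamma) - \ell(\gamma)$ for $\gamma \in \Gamma$. Since $\ell$ is entire, $\ell(z+\gamma) - \ell(z) = \ell(\gamma) = d(\gamma) - \tilde d(\gamma)$, which exhibits $d$ and $\tilde d$ as equivalent summands of automorphy; equivalently, $h(z) := {\bf e}(\ell(z))$ is a nowhere-vanishing holomorphic function on $\mathbb{C}^n$ with $h(z+\gamma)/h(z) = {\bf e}(\ell(\gamma))$, so the factors of automorphy ${\bf e}(d(\gamma))$ and ${\bf e}(\tilde d(\gamma))$ differ by a coboundary. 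The problem thus reduces to choosing the $c_i$ so that $\tilde d$ is $\mathbb{R}$-valued on $\Gamma$ and vanishes on $e_{m+1},\ldots,e_n$.

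Using the $\mathbb{Z}$-basis $\{e_1,\ldots,e_n, s_1,\ldots,s_m\}$ of $\Gamma$, I would fix the $c_i$ in three stages. First, setting $c_{m+j} := d(e_{m+j})$ forces $\tilde d(e_{m+j}) = 0$ for $j = 1,\ldots,n-m$. Second, imposing $\tilde d(e_k) \in \mathbb{R}$ for $k=1,\ldots,m$ determines $\mathrm{Im}(c_k) = \mathrm{Im}(d(e_k))$. This consumes every parameter except the $m$ real unknowns $\mathrm{Re}(c_1),\ldots,\mathrm{Re}(c_m)$.

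The remaining step, which carries all the content, is to enforce $\tilde d(s_k) \in \mathbb{R}$ for $k = 1,\ldots,m$. Expanding $\ell(s_k) = \sum_{i=1}^n c_i s_{ik}$ and writing $s_{ik} = a_{ik} + \sqrt{-1}\, b_{ik}$, substitution of the already-determined coefficients reduces the $m$ reality conditions to a real linear system
$$
\sum_{i=1}^m b_{ik}\, \mathrm{Re}(c_i) = \alpha_k \qquad (k = 1,\ldots,m)
$$
with known right-hand sides $\alpha_k$. The coefficient matrix is $\mathrm{Im}(S_1) = (b_{ik})_{1 \le i,k \le m}$, and the only potential obstacle is whether this system is solvable. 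This disappears immediately: by the first-normal-form hypothesis $\det(\mathrm{Im}(S_1)) \neq 0$, so the system has a unique solution. The resulting $\tilde d$ maps the $\mathbb{Z}$-basis into $\mathbb{R}$, and being a homomorphism it takes values in $\mathbb{R}$ on all of $\Gamma$, with the required vanishing on $e_{m+1},\ldots,e_n$. No analytic input beyond this linear-algebra step is needed.
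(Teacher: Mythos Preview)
Your argument is correct. Both you and the paper reach $\tilde d$ by subtracting a $\mathbb{C}$-linear form $\ell$ from $d$, but the constructions of $\ell$ differ. The paper first normalizes so that $d(e_j)=0$ for all $j$ (by subtracting $z\mapsto\sum_j d(e_j)z_j$), then extends $d$ $\mathbb{R}$-linearly to all of $\mathbb{C}^n$ via the real basis $\{e_1,\ldots,e_n,s_1,\ldots,s_n\}$, sets $k:=\mathrm{Im}(d)$, and invokes the standard device $\ell(v):=k(\sqrt{-1}\,v)+\sqrt{-1}\,k(v)$ to produce a $\mathbb{C}$-linear map whose imaginary part equals $k$; the conditions $k(e_j)=0$ and $k(s_{m+j})=k(\sqrt{-1}\,e_{m+j})=0$ then force $\tilde d(e_{m+j})=0$ automatically. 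Your route is more computational: you write $\ell(z)=\sum_i c_i z_i$, impose the constraints one at a time, and reduce the last step to an $m\times m$ real linear system with coefficient matrix $\mathrm{Im}(S_1)$, solvable by the first-normal-form hypothesis $\det(\mathrm{Im}(S_1))\neq 0$. The paper's version is coordinate-free and shows that the only input needed is that $\{e_j,s_j\}$ is a real basis of $\mathbb{C}^n$; yours is more hands-on, makes the role of $\det(\mathrm{Im}(S_1))\neq 0$ explicit, and avoids the preliminary normalization and extension steps.
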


\begin{proof}
First we may assume $d(e_j) = 0$ for $j=1, \dots ,n$ because
$\rho = {\bf e}(d)$ gives a topologically trivial holomorphic
line bundle. We define a discrete subgroup $\widetilde{\Gamma }$ of
rank $2n$ by
$$\widetilde{\Gamma } := \Gamma \oplus \bigoplus _{j=1}^{n-m}
{\mathbb Z}s_{m+j}.$$
Putting $d(s_{m+j}) = 0$ for $j=1, \dots ,n-m$, we extend $d$ to
a homomorphism $d : \widetilde{\Gamma } \longrightarrow {\mathbb C}$.
Furthermore it is extended to an ${\mathbb R}$-linear mapping
$d : {\mathbb C}^n \longrightarrow {\mathbb C}$. If we set 
$k := {\rm Im}(d)$, then $k : {\mathbb C}^n \longrightarrow {\mathbb R}$
is an ${\mathbb R}$-linear mapping such that
$$k(e_j) = 0, \quad j=1, \dots , n$$
and
$$k(s_{m+j}) = k(\sqrt{-1}e_{m+j}) = 0, \quad j=1, \dots , n-m.$$
We define a ${\mathbb C}$-linear mapping $\ell : {\mathbb C}^n
\longrightarrow {\mathbb C}$ by
$$\ell (v) := k(\sqrt{-1}v) + \sqrt{-1}k(v),\quad v \in {\mathbb C}^n.$$
Set $\tilde{d}(\gamma ) := d(\gamma ) - \ell (\gamma )$ for $\gamma \in \Gamma $.
Then $\tilde{d} : \Gamma \longrightarrow {\mathbb R}$ is a
homomorphism satisfying $\tilde{d}(e_{m+j}) = 0$ for
$j=1, \dots ,n-m$. Since $\ell $ is ${\mathbb C}$-linear, $d$ and
$\tilde{d}$ are equivalent.
\end{proof}

From now on we assume that the representation $\rho = {\bf e}(d)$ is
given by a homomorphism $d : \Gamma \longrightarrow {\mathbb R}$
having the properties in Lemma 3.3.

\section{Dolbeault-Kazama isomorphisms}

Consider the structure of principal $({\mathbb C}^{*})^{n-m}$-bundle
$\mu : X \longrightarrow {\mathbb T}$ stated in Section 2. We write
standard coordinates $z=(z_1, \dots ,z_n)$ as $z=(z',z'')$, where
$z' = (z_1, \dots ,z_m)$ and $z'' = (z_{m+1}, \dots , z_n)$.
The $\overline{\partial }$-operator is decomposed as
$\overline{\partial } = \overline{\partial }_1 + \overline{\partial }_2$,
where $\overline{\partial }_1$ is the $\overline{\partial }$-operator
with respect to $z'$, and $\overline{\partial }_2$ is the one with
respect to $z''$. Let ${\mathcal F}$ be the sheaf of germs of
$C^{\infty }$ functions on $X$ which are holomorphic along the
fibers $({\mathbb C}^{*})^{n-m}$, and let ${\mathcal F}^{r,s}$ be
the sheaf of $(r,s)$-forms with respect to
$\{ dz_1, \dots ,dz_m,d\overline{z}_1, \dots , d\overline{z}_m\}$
with coefficients in ${\mathcal F}$. Similarly we denote by
${\mathcal F}(L)$ the sheaf of germs of $C^{\infty }$ functions with
valued in $L$ which are holomorphic along the fibers.
Let ${\mathcal F}^{r,s}(L)$ be the sheaf of $(r,s)$-forms
with respect to $\{ dz_1, \dots ,dz_m,d\overline{z}_1, \dots , d\overline{z}_m\}$
with coefficients in ${\mathcal F}(L)$.
\par
The following proposition is due to Kazama and Umeno (Lemma 1.1 in
\cite{kazama-umeno}).

\begin{proposition}
For any $r,s$ we have
$$H^p(X,{\mathcal F}^{r,s}(L)) = 0,\quad p\geq 1.$$
\end{proposition}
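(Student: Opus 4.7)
Plan: The idea is to exploit the principal bundle structure $\mu : X \longrightarrow {\mathbb T}$ and use the Leray spectral sequence. The structural observation driving the argument is that multiplication by the pullback $\mu^*\varphi$ of any $C^\infty$ function $\varphi$ on $\mathbb T$ preserves $\mathcal{F}^{r,s}(L)$, because $\mu^*\varphi$ depends only on $z' = (z_1,\dots,z_m)$ and is therefore trivially holomorphic in $z''$. Hence $\mathcal{F}^{r,s}(L)$ is a module over $\mu^{-1}\mathcal{C}^\infty_{\mathbb T}$, and in particular its direct image $\mu_*\mathcal{F}^{r,s}(L)$ carries the structure of a $\mathcal{C}^\infty_{\mathbb T}$-module.

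First I would handle the base. Since $\mathbb T$ is compact, hence paracompact, smooth partitions of unity exist on $\mathbb T$, so $\mathcal{C}^\infty_{\mathbb T}$ is fine and therefore so is the $\mathcal{C}^\infty_{\mathbb T}$-module $\mu_*\mathcal{F}^{r,s}(L)$. This gives $H^p({\mathbb T}, \mu_*\mathcal{F}^{r,s}(L)) = 0$ for all $p \geq 1$.

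Next I would show the vanishing of the higher direct images. Since $\mu$ is locally trivial, one can cover $\mathbb T$ by polydiscs $V$ small enough that both $\mu^{-1}(V)$ and $L|_{\mu^{-1}(V)}$ are trivial; then $\mu^{-1}(V) \cong V \times ({\mathbb C}^*)^{n-m}$, and a section of $\mathcal{F}^{r,s}(L)$ over this set is an $(r,s)$-form in $dz', d\overline{z}'$ whose coefficients are $C^\infty$ on $V$ and holomorphic on the Stein fiber $({\mathbb C}^*)^{n-m}$. A Dolbeault-type argument along the fiber, using that $\mathcal O$ has no higher cohomology on $({\mathbb C}^*)^{n-m}$, yields $H^q(\mu^{-1}(V),\mathcal{F}^{r,s}(L)) = 0$ for $q \geq 1$, so $R^q\mu_*\mathcal{F}^{r,s}(L) = 0$ for $q \geq 1$.

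Putting these together, the Leray spectral sequence
$$E_2^{p,q} = H^p({\mathbb T}, R^q\mu_*\mathcal{F}^{r,s}(L)) \;\Longrightarrow\; H^{p+q}(X, \mathcal{F}^{r,s}(L))$$
collapses with every term for $p+q \geq 1$ vanishing, which gives the claim. The step I expect to be the main obstacle is the fiberwise vanishing: one needs a $C^\infty$-parametrized Dolbeault lemma on $({\mathbb C}^*)^{n-m}$ that respects smoothness of the coefficients in the base variable $z'$, so that the spectral sequence computation has genuine sheafy content and not merely stalkwise vanishing on fibers.
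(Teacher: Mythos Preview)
The paper does not supply its own proof of this proposition; it simply attributes the statement to Kazama and Umeno (Lemma~1.1 in \cite{kazama-umeno}). So there is no in-paper argument to compare your proposal against, only the cited reference.

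Your Leray spectral sequence plan is sound, and you have correctly isolated the one nontrivial step: the vanishing of $R^q\mu_*\mathcal{F}^{r,s}(L)$ for $q\geq 1$, which amounts to a $\overline{\partial}$-lemma on the Stein fiber $(\mathbb{C}^*)^{n-m}$ with $C^\infty$ dependence on the base parameter~$z'$. This is exactly the content of the Kazama--Umeno lemma you would be reproving. A slightly more direct packaging of the same idea, and closer to how such results are usually presented, is to bypass the spectral sequence altogether: resolve $\mathcal{F}^{r,s}(L)$ on $X$ by the complex of sheaves of $(0,q)$-forms in $d\overline{z}_{m+1},\dots,d\overline{z}_n$ with $C^\infty$ coefficients valued in $L$, using the operator $\overline{\partial}_2$. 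Each term of this resolution is a $\mathcal{C}^\infty_X$-module, hence fine on $X$, so $H^p(X,\mathcal{F}^{r,s}(L))$ is computed by the $\overline{\partial}_2$-cohomology of global sections; the vanishing of that cohomology for $p\geq 1$ is again the parametrized Dolbeault lemma on the Stein fiber. Both routes converge on the same analytic input, so your identification of the ``main obstacle'' is accurate; the spectral sequence just adds a layer of bookkeeping that the fine-resolution argument avoids.
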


We have a resolution of ${\mathcal O}(L)$
\begin{equation}
0 \longrightarrow {\mathcal O}(L) \longrightarrow
{\mathcal F}(L)\xrightarrow{\, \overline{\partial }_1\, }
{\mathcal F}^{0,1}(L) \xrightarrow{\, \overline{\partial }_1\, }
\cdots \xrightarrow{\, \overline{\partial }_1\, }
{\mathcal F}^{0,m}(L) \longrightarrow 0.
\end{equation}
By Proposition 4.1 we obtain the Dolbeault-Kazama isomorphisms
\begin{equation}
H^p(X,{\mathcal O}(L)) \cong \frac{Z_{\overline{\partial }_1}
(X,{\mathcal F}^{0,p}(L))}{B_{\overline{\partial }_1}
(X,{\mathcal F}^{0,p}(L))}
\end{equation}
for $p\geq 1$, where we set
$$Z_{\overline{\partial }_1}
(X,{\mathcal F}^{0,p}(L)) := \{ \varphi \in
H^0(X,{\mathcal F}^{0,p}(L)) ; \overline{\partial }_1
\varphi = 0 \}$$
and $B_{\overline{\partial }_1}(X,{\mathcal F}^{0,p}(L))
:= \overline{\partial }_1H^0(X,{\mathcal F}^{0,p-1}(L))$.

\section{Fourier expansion}
We can identify $H^0(X,{\mathcal F})$ with the space of all
$\Gamma $-periodic $C^{\infty }$ functions on ${\mathbb C}^n$
which are holomorphic with respect to
$z'' = (z_{m+1},\dots ,z_n)$. Real coordinates
$t = (t_1, \dots ,t_{2n})$ are written as $t = (t',t'')$,
where $t' = (t_1,\dots ,t_{n+m})$ and $t'' = (t_{n+m+1},\dots ,t_{2n})$.
Let $f \in H^0(X,{\mathcal F})$. Since $\partial f/\partial 
\overline{z}_{m+j} = 0$ for $j = 1, \dots , n-m$, we have
the following Fourier expansion of $f$:
\begin{equation}
f(t) = \sum _{\sigma \in {\mathbb Z}^{n+m}} a^{\sigma }
\exp \left( -2\pi \sum _{i=m+1}^{n} \sigma _i t_{n+i}\right)
{\bf e}(\langle \sigma ,t'\rangle),
\end{equation}
where $a^{\sigma }$ is a complex number and
$\langle \sigma ,t'\rangle = \sum _{i=1}^{n+m}\sigma _i t_i$.
The following lemma follows from the well-known result
for Fourier coefficients of $C^{\infty }$ functions.

\begin{lemma}
Let $\{ a^{\sigma } ; \sigma \in {\mathbb Z}^{n+m}\}$ be a
sequence of complex numbers. We consider a formal series
$$\sum _{\sigma \in {\mathbb Z}^{n+m}} a^{\sigma }
\exp \left( -2\pi \sum _{i=m+1}^{n} \sigma _i t_{n+i}\right)
{\bf e}(\langle \sigma ,t'\rangle ).$$
Then the series converges to a function in
$H^0(X,{\mathcal F})$ if and only if for any $R>0$ and
any $k > 0$ we have
$$\sup \{ |a^{\sigma }| R^{\sum _{i=m+1}^n|\sigma _i|}
|\sigma |^k ; \sigma \in {\mathbb Z}^{n+m}\} < \infty .$$
\end{lemma}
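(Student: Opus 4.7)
The plan is to fix $t''$ and reduce both implications to the classical Fourier decay/summation theorem on the real torus $\mathbb{R}^{n+m}/\mathbb{Z}^{n+m}$, applied to the parametrized coefficient $b^\sigma(t'') := a^\sigma \exp(-2\pi \sum_{i=m+1}^n \sigma_i t_{n+i})$. On the one hand, $b^\sigma(t'')$ is the $\sigma$-th Fourier coefficient of $f(\cdot, t'')$ whenever the series converges to $f \in H^0(X, \mathcal{F})$; on the other, the extra exponential factor is exactly what allows the supremum bound involving $R$ to absorb growth in the fiber directions.

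For the ``only if'' direction, I assume the series sums to $f \in H^0(X, \mathcal{F})$. Given $R > 0$ and $k > 0$, I set $T := \max(0, (\log R)/(2\pi))$ and $K := [-T, T]^{n-m}$. Since $f$ is $C^\infty$ on $\mathbb{C}^n$ and periodic in $t'$, the classical Fourier decay estimate on the $(n+m)$-torus, combined with continuity in $t''$ of the $C^k$-seminorms of $f(\cdot, t'')$ and the compactness of $K$, yields a constant $C$ with $|b^\sigma(t'')|(1 + |\sigma|)^k \leq C$ uniformly in $\sigma \in \mathbb{Z}^{n+m}$ and $t'' \in K$. Specializing to $t_{n+i} := -\mathrm{sign}(\sigma_i)\, T$, which lies in $K$, converts the exponential factor into $R^{\sum_{i=m+1}^n |\sigma_i|}$ and produces the required bound on $|a^\sigma|$.

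For the ``if'' direction, I fix a compact $K' \subset \mathbb{C}^n$, choose $T_0$ so that $|t_{n+i}| \leq T_0$ on $K'$ for $i = m+1, \ldots, n$, and put $R := e^{2\pi T_0}$. Each term of the series is then bounded on $K'$ by $|a^\sigma| R^{\sum_{i=m+1}^n |\sigma_i|}$, which the hypothesis with $k := n + m + 1$ dominates by a summable sequence, giving absolute and uniform convergence on $K'$. Since any partial derivative $\partial / \partial t_\alpha$ multiplies the general term by at most a factor linear in $|\sigma|$, applying the same estimate with larger $k$ handles derivatives of all orders, so the limit is $C^\infty$. Each term is separately holomorphic in $z''$: for $j \geq m+1$ the formula (2.2) reduces to $\partial/\partial \overline{z}_j = (1/2)(\partial/\partial t_j + \sqrt{-1}\, \partial/\partial t_{n+j})$, and on a single term the phase and exponential contributions cancel; uniform convergence transports this property to the sum. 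The only slightly delicate point is the uniformity in $t''$ when invoking the Fourier decay, which reduces to continuity in $t''$ of the $C^k$-seminorm of $f(\cdot, t'')$.
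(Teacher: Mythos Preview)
The paper does not actually prove this lemma; it only states that it ``follows from the well-known result for Fourier coefficients of $C^{\infty}$ functions.'' Your argument is a correct and careful unpacking of precisely that well-known result: you freeze $t''$, invoke the rapid decay of Fourier coefficients of smooth torus functions (with the decay constant controlled by the $C^k$-seminorm of $f(\cdot,t'')$, hence uniform on compacta in $t''$), and then specialize $t''$ to turn the exponential weight into the $R^{\sum |\sigma_i|}$ factor; the converse is the standard Weierstrass-type estimate. So your approach is the same as the paper's in spirit, just made explicit, and is correct.
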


We write $\sigma \in {\mathbb Z}^{n+m}$ as
$\sigma = (\sigma ', \sigma '',\sigma ''')$, where
$\sigma ' = (\sigma _1, \dots ,\sigma _m)$,
$\sigma '' = (\sigma _{m+1}, \dots ,\sigma _n)$ and
$\sigma ''' = (\sigma _{n+1}, \dots ,\sigma _{n+m})$.
If we set
\begin{equation*}
\begin{split}
f^{\sigma }(t) &:= a^{\sigma }
\exp \left( -2\pi \sum _{i=m+1}^{n} \sigma _i t_{n+i}\right)
{\bf e}(\langle \sigma ,t'\rangle )\\
& = a^{\sigma }{\bf e}(\langle \sigma ,t'\rangle + \sqrt{-1}\langle \sigma '',t''\rangle ),\\
\end{split}
\end{equation*}
then (5.1) is rewritten as
\begin{equation}
f(t) = \sum _{\sigma \in {\mathbb Z}^{n+m}}
f^{\sigma }(t).
\end{equation}
By (2.2) we obtain
\begin{equation}
\frac{\partial f^{\sigma }}{\partial \overline{z}_j}
(t) = \pi \sum _{k=1}^m c_{kj}K_{\sigma ,k}f^{\sigma }(t)
\end{equation}
for $j=1, \dots ,m$, where we set
\begin{equation}
K_{\sigma ,k}:= \sum _{\ell =1}^n \sigma _{\ell }s_{\ell k}
- \sigma _{n+k}
\end{equation}
for $k=1,\dots ,m$. We note
\begin{equation}
K_{\sigma } := (K_{\sigma ,1},\dots ,K_{\sigma ,m}) =
(\sigma ', \sigma '')S - \sigma '''
\end{equation}
for $\sigma = (\sigma ', \sigma '', \sigma ''') \in {\mathbb Z}^{n+m}$.
Put
$$\widetilde{K}_{\sigma ,j} := \pi \sum _{\ell =1}^m
c_{\ell j}K_{\sigma ,\ell }$$
for $j=1, \dots ,m$. We rewrite (5.3) as
\begin{equation}
\frac{\partial f^{\sigma }}{\partial \overline{z}_j}(t)
= \widetilde{K}_{\sigma ,j}f^{\sigma }(t)
\end{equation}
for $j=1, \dots ,m$. Let $\widetilde{K}_{\sigma } :=
(\widetilde{K}_{\sigma ,1}, \dots , \widetilde{K}_{\sigma ,m})$.
Then we have the relation
\begin{equation}
\widetilde{K}_{\sigma } = \pi K_{\sigma }(c_{jk})_{1\leq j,k
\leq m}.
\end{equation}

\section{Translation of $\overline{\partial }_1$-equations}
Let $L$ be a homogeneous line bundle over a toroidal group
$X = {\mathbb C}^n/\Gamma $ with ${\rm rank}\, \Gamma = n+m$.
It is given by a representation $\rho  = {\bf e}(d)$
of $\Gamma $, where $d : \Gamma \longrightarrow {\mathbb R}$ is
a homomorphism having the properties in Lemma 3.3. We define a
linear polynomial $a(t)$ in $t$ by
\begin{equation}
a(t) := - \sum _{i=1}^m d(e_i) t_i - \sum _{i=1}^m d(s_i) t_{n+i}.
\end{equation}
It is easy to check that
\begin{equation}
a(t+\gamma ) + d(\gamma ) - a(t) = 0
\end{equation}
for any $\gamma \in \Gamma $ and any $t \in {\mathbb R}^{2n}$.
By (2.2) we obtain $\partial a/\partial \overline{z}_i = 0$
for $i=m+1, \dots ,n$. Then we have $a \in H^0({\mathbb C}^n,
\pi ^{*}{\mathcal F})$, where $\pi : {\mathbb C}^n \longrightarrow
X$ is the projection. We set
$$F(t) := {\bf e}(a(t)).$$
\par
We can consider $H^0(X,{\mathcal F}^{r,s})$, $H^0(X,{\mathcal F}^{r,s}(L))$
and $H^p(X,{\mathcal O}(L))$ as Fr\'echet spaces in the usual manner.
Then the multiplicity by $F(t)$ gives isomorphisms between
Fr\'echet spaces
$$F : H^0(X,{\mathcal F}^{r,s}(L)) \longrightarrow
H^0(X,{\mathcal F}^{r,s}),\quad \varphi \longmapsto F\varphi $$
by (6.2). We set
$$G(t) := F(t)^{-1} = {\bf e}(- a(t)).$$
Then, for any $\varphi \in H^0(X,{\mathcal F}^{0,p}(L))$ there
exists uniquely $\phi \in H^0(X,{\mathcal F}^{0,p})$ such that
\begin{equation}
\varphi = G\phi .
\end{equation}
\par
By a straight calculation using (2.2) we obtain
\begin{equation*}
\begin{split}
-a(t) & = \frac{1}{2} \sum _{j=1}^m\left( d(e_j) -\sqrt{-1}
\sum _{i=1}^m \left(d(s_i)c_{ij} - d(e_j)\sum _{k=1}^m
a_{ik}c_{kj}\right) \right) z_j\\
& \quad + \frac{1}{2} \sum _{j=1}^m\left( d(e_j) +\sqrt{-1}
\sum _{i=1}^m \left(d(s_i)c_{ij} - d(e_j)\sum _{k=1}^m
a_{ik}c_{kj}\right) \right) \overline{z}_j.
\end{split}
\end{equation*}
Then, setting
$$\alpha _j := d(e_j) - \sqrt{-1}\sum _{i=1}^m \left( d(s_i)c_{ij}
- d(e_i) \sum _{k=1}^m a_{ik} c_{kj}\right),$$
we have
\begin{equation}
- a(t) = \frac{1}{2}\sum _{j=1}^m (\alpha _j z_j +
\overline{\alpha }_j \overline{z}_j).
\end{equation}
We denote
$$A_1 := (a_{ik})_{1\leq i,k \leq m}\quad
\text{and}\quad C_1 := (c_{kj})_{1\leq k,j \leq m}.$$
Then we have
\begin{equation*}
\begin{split}
(\alpha _1, \dots ,\alpha _m) &= 
\quad (d(e_1),\dots ,d(e_m))\\
& \quad -
\sqrt{-1}\left\{ (d(s_1), \dots ,d(s_m)) - (d(e_1), \dots ,d(e_m))A_1
\right\} C_1.\\
\end{split}
\end{equation*}
Operating $\overline{\partial }_1$ on $G(t)$, we obtain
$$\overline{\partial }_1 G =
\sum _{j=1}^m \frac{\partial G}{\partial \overline{z}_j}
d\overline{z}_j = \pi \sqrt{-1}G\sum _{j=1}^m\overline{\alpha }_j
d\overline{z}_j.$$
We set $\Phi _0 := \sum _{j=1}^m \beta _j d\overline{z}_j$, where
$\beta _j := \pi \sqrt{-1}\overline{\alpha }_j$. Then we have
\begin{equation}
\overline{\partial }_1 G = G \Phi _0.
\end{equation}
By the relation (6.3) we obtain
\begin{equation}
\overline{\partial }_1 \varphi = G(\Phi _0 \wedge \phi +
\overline{\partial }_1 \phi ).
\end{equation}
Therefore $\overline{\partial }_1 \varphi = 0$ if and only if
\begin{equation}
\Phi _0 \wedge \phi + \overline{\partial }_1 \phi = 0.
\end{equation}
\par
We assume that there exists $\eta \in H^0(X,{\mathcal F}^{0,p-1}(L))$
with $\overline{\partial }_1 \eta = \varphi $. Then we can take
$\psi \in H^0(X,{\mathcal F}^{0,p-1})$ such that
$\eta = G\psi $ and
\begin{equation}
\phi = \Phi _0 \wedge \psi + \overline{\partial }_1 \psi
\end{equation}
by (6.6). Therefore the problem to find $\eta $ with 
$\overline{\partial }_1 \eta = \varphi $ for
$\varphi \in Z_{\overline{\partial }_1}(X,{\mathcal F}^{0,p}(L))$ is
translated to the following problem.\par
{\bf Problem.} For any $\phi \in H^0(X,{\mathcal F}^{0,p})$
satisfying (6.7), does there exist $\psi \in H^0(X,{\mathcal F}^{0,p-1})$
such that the equation (6.8) holds?\\

\section{Formal solutions}
We may assume that the homogeneous line bundle $L$ is not analytically
trivial. Then we have $a(t) \not= 0$. This means 
$(\overline{\alpha }_1,\dots ,\overline{\alpha }_m) \not=
(0, \dots ,0)$. Hence we have
$$(\beta _1,\dots ,\beta _m) = \pi \sqrt{-1}
(\overline{\alpha }_1,\dots ,\overline{\alpha }_m) \not=
(0, \dots ,0).$$
Consider equations
\begin{equation}
\widetilde{K}_{\sigma ,j} + \beta _j = 0,\quad
j=1,\dots ,m.
\end{equation}
It is easily seen that (7.1) is equivalent to
\begin{equation}
K_{\sigma }C_1 + \sqrt{-1} (\overline{\alpha }_1,\dots ,
\overline{\alpha }_m) = (0,\dots ,0).
\end{equation}
Furthermore, (7.2) is equivalent to
\begin{equation}
\begin{cases}
(\sigma ', \sigma ''){\rm Re}(S) + \sigma ''' +
({\rm Im}(\alpha _1), \dots , {\rm Im}(\alpha _m))C_1^{-1} = 0,\\
(\sigma ', \sigma ''){\rm Im}(S) +
({\rm Re}(\alpha _1), \dots , {\rm Re}(\alpha _m))C_1^{-1} = 0\\
\end{cases}
\end{equation}
by (5.5). We note that (7.3) does not hold for $\sigma =0$.

\begin{lemma}
If there exists $\sigma _0 =(\sigma _0', \sigma _0'', \sigma _0''')
\in {\mathbb Z}^{n+m}\setminus \{ 0 \}$ satisfying (7.3), then
it is unique.
\end{lemma}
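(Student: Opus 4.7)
The plan is to use the irrationality condition (IS) on the difference of two hypothetical solutions.

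Suppose $\sigma_0, \sigma_1 \in {\mathbb Z}^{n+m} \setminus \{0\}$ both satisfy (7.3), and set $\tau := \sigma_0 - \sigma_1 = (\tau',\tau'',\tau''') \in {\mathbb Z}^{n+m}$. Subtracting the two instances of (7.3) cancels the inhomogeneous terms involving $\alpha_1,\dots,\alpha_m$, so $\tau$ satisfies the homogeneous system
\begin{equation*}
(\tau',\tau''){\rm Re}(S) + \tau''' = 0,\qquad (\tau',\tau''){\rm Im}(S) = 0.
\end{equation*}

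Next, I would combine these two real equations into a single complex one. Multiplying the second by $\sqrt{-1}$ and adding to the first gives
\begin{equation*}
(\tau',\tau'')S + \tau''' = 0,
\end{equation*}
that is $(\tau',\tau'')S = -\tau''' \in {\mathbb Z}^m$. Here $(\tau',\tau'') \in {\mathbb Z}^n$. By the irrationality condition (IS), which is available because $X$ is a toroidal group, this forces $(\tau',\tau'') = 0$. Substituting back into either equation of the homogeneous system then yields $\tau''' = 0$, so $\tau = 0$ and hence $\sigma_0 = \sigma_1$.

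There is no real obstacle here; the only point that needs a little care is recognizing that the system (7.3), though written as two real equations, is precisely the real and imaginary parts of the single complex equation $(\sigma',\sigma'')S + \sigma''' = -\sqrt{-1}(\overline{\alpha}_1,\dots,\overline{\alpha}_m)C_1^{-1}$ (equivalently, the equation $K_\sigma C_1 + \sqrt{-1}(\overline{\alpha}_1,\dots,\overline{\alpha}_m) = 0$ of (7.2) re-expressed via (5.5)), so that the homogeneous part involves exactly the quantity $(\tau',\tau'')S + \tau'''$ to which (IS) applies.
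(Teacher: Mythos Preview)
Your proof is correct and follows essentially the same approach as the paper: subtract the two instances of (7.3) to obtain the homogeneous complex equation $(\tau',\tau'')S + \tau''' = 0$, then invoke the irrationality condition (IS) to conclude $\tau = 0$. The paper's version is terser, writing the combined complex equation directly, but the argument is identical.
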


\begin{proof}
Suppose that $\sigma = (\sigma ', \sigma '', \sigma ''') \in
{\mathbb Z}^{n+m}\setminus \{ 0 \}$ also satisfies (7.3).
Then we have
$$(\sigma ' - \sigma _0', \sigma '' - \sigma _0'')S +
\sigma ''' - \sigma _0''' = 0.$$
By the irrationality condition (IS) we obtain
$\sigma - \sigma _0 = 0$.
\end{proof}

We define
\begin{equation*}
Z :=
\begin{cases}
{\mathbb Z}^{n+m} \setminus \{ \sigma _0 \} & \text{if 
there exists $\sigma _0$ with (7.3)},\\
{\mathbb Z}^{n+m} & \text{otherwise.}\\
\end{cases}
\end{equation*}
For any $\sigma \in {\mathbb Z}^{n+m}$ we denote by $j(\sigma )$
the smallest
integer $j$ satisfying
$$|\widetilde{K}_{\sigma , j} + \beta _{j}|
= \max \{ |\widetilde{K}_{\sigma , k} + \beta _{k}|;
k = 1, \dots , m \}.$$
We note $j(\sigma _0) = 1$ for $\sigma _0$ satisfying (7.3).\par
Every $\phi \in H^0(X,{\mathcal F}^{0,p})$ has the Fourier expansion
\begin{equation}
\phi = \sum _{\sigma \in {\mathbb Z}^{n+m}}\phi ^{\sigma },\quad
\phi ^{\sigma } = {\bf e}(\langle \sigma ,t'\rangle + \sqrt{-1}(\langle \sigma '',t''\rangle )
\phi _c^{\sigma },
\end{equation}
where $\phi _c^{\sigma }$ is a $(0,p)$-form with constant coefficients.
For any $\sigma \in {\mathbb Z}^{n+m}$ we have the unique representation
\begin{equation}
\begin{split}
\phi ^{\sigma }& = \sum _{1\leq i_1 < \dots < i_{p-1}\leq m}
\left( \phi^{\sigma }_{j(\sigma )i_1\dots i_{p-1}}
d\overline{z}_{j(\sigma )} + \sum _{j \notin \{ j(\sigma ),i_1, \dots ,i_{p-1}\}}
\phi ^{\sigma }_{ji_1\dots i_{p-1}}d\overline{z}_j\right)\\
&\quad \hspace{2cm} \wedge d\overline{z}_{i_1} \wedge \dots \wedge
d\overline{z}_{i_{p-1}},
\end{split}
\end{equation}
where
$$\phi ^{\sigma }_{k i_1\dots i_{p-1}} =
a^{\sigma }_{k i_1\dots i_{p-1}}
{\bf e}(\langle \sigma ,t'\rangle + \sqrt{-1}\langle \sigma '',t''\rangle ),\quad
a^{\sigma }_{k i_1\dots i_{p-1}} \in {\mathbb C}.$$
Here we mean that $\phi ^{\sigma }_{j(\sigma )i_1 \dots i_{p-1}} = 0$
if $j(\sigma ) \in \{ i_1, \dots ,i_{p-1}\}$. Since
\begin{equation*}
\begin{split}
\overline{\partial }_1 \phi ^{\sigma }& =
\sum _{1\leq i_1 < \dots < i_{p-1}\leq m}\left\{
\sum _{j \notin \{ j(\sigma ),i_1, \dots ,i_{p-1}\}}
\left( \frac{\partial \phi ^{\sigma }_{ji_1 \dots i_{p-1}}}
{\partial \overline{z}_{j(\sigma )}} -
\frac{\partial \phi ^{\sigma }_{j(\sigma )i_1 \dots i_{p-1}}}
{\partial \overline{z}_{j}}\right)\right.\\
& \quad \hspace{2cm}
 \times d\overline{z}_{j(\sigma )} \wedge
d\overline{z}_j
\Biggr\} 
\wedge d\overline{z}_{i_1}\wedge
\dots \wedge d\overline{z}_{i_{p-1}},\\
\end{split}
\end{equation*}
we have
\begin{equation}
\begin{split}
\Phi _0 \wedge \phi ^{\sigma } + \overline{\partial }_1\phi ^{\sigma }
& = \sum _{1 \leq i_1 < \dots < i_{p-1} \leq m}\left\{
\sum _{j \notin \{ j(\sigma ), i_1, \dots ,i_{p-1}\}}
\left( \frac{\partial \phi ^{\sigma }_{ji_1 \dots i_{p-1}}}
{\partial \overline{z}_{j(\sigma )}}\right. \right.\\
&\quad - \left.
\frac{\partial \phi ^{\sigma }_{j(\sigma )i_1 \dots i_{p-1}}}
{\partial \overline{z}_{j}}
+ \beta _{j(\sigma )}\phi ^{\sigma }_
{ji_1 \dots i_{p-1}} - \beta _{j}\phi ^{\sigma }_
{j(\sigma )i_1 \dots i_{p-1}}\right)\\
&\quad \times d\overline{z}_{j(\sigma )}
\wedge d\overline{z}_j
 + \left. \sum _{\substack{j,k \notin \{ j(\sigma ),i_1,\dots
,i_{p-1}\}\\
j \not= k}}\beta _k \phi ^{\sigma }_{ji_1\dots i_{p-1}}
d\overline{z}_k \wedge d\overline{z}_j \right\}\\
&\quad \wedge d\overline{z}_{i_1} \wedge \dots \wedge
d\overline{z}_{i_{p-1}}.\\
\end{split}
\end{equation}
It follows from (5.6) that
$$\frac{\partial \phi ^{\sigma }_{ji_1\dots i_{p-1}}}
{\partial \overline{z}_k} = \widetilde{K}_{\sigma ,k}
\phi ^{\sigma }_{ji_1\dots i_{p-1}}.$$
If $\phi $ satisfies (6.7), then
\begin{equation}
(\widetilde{K}_{\sigma ,j(\sigma )} + \beta _{j(\sigma )})
\phi ^{\sigma }_{ji_1\dots i_{p-1}} =
(\widetilde{K}_{\sigma ,j} + \beta _{j})
\phi ^{\sigma }_{j(\sigma )i_1\dots i_{p-1}}
\end{equation}
for $j\notin \{ j(\sigma ),i_1,\dots ,i_{p-1}\}$ and
\begin{equation}
\beta _k \phi ^{\sigma }_{ji_1\dots i_{p-1}} = 0
\end{equation}
for $j,k \notin \{ j(\sigma ), i_1, \dots ,i_{p-1}\}$ with
$j \not= k$.\par
Assume that there exists $\psi \in H^0(X,{\mathcal F}^{0,p-1})$
satisfying (6.8). We have the expansion
$\psi = \sum _{\sigma \in {\mathbb Z}^{n+m}}\psi ^{\sigma }$
as (7.4). Each $\psi ^{\sigma }$ has the representation
\begin{equation}
\psi ^{\sigma } = \sum _{1\leq i_1 < \dots < i_{p-1}\leq m}
\psi ^{\sigma }_{i_1 \dots i_{p-1}}d\overline{z}_{i_1}
\wedge \dots \wedge d\overline{z}_{i_{p-1}},
\end{equation}
where
$$\psi ^{\sigma }_{i_1 \dots i_{p-1}} = b^{\sigma }_{i_1\dots i_{p-1}}
{\bf e}(\langle \sigma ,t'\rangle + \sqrt{-1}\langle \sigma '',t''\rangle ),\quad
b^{\sigma }_{i_1\dots i_{p-1}}\in {\mathbb C}.
$$
Since
\begin{equation*}
\begin{split}
\overline{\partial }_1 \psi ^{\sigma } & =
\sum _{1\leq i_1 < \dots < i_{p-1}\leq m}\left(
\frac{\partial \psi ^{\sigma }_{i_1\dots i_{p-1}}}
{\partial \overline{z}_{j(\sigma )}}d\overline{z}_{j(\sigma )}
+ \sum _{j \notin \{ j(\sigma ),i_1, \dots ,i_{p-1}\}}
\frac{\partial \psi ^{\sigma }_{i_1\dots i_{p-1}}}
{\partial \overline{z}_j}d\overline{z}_j\right)\\
&\quad \hspace{2cm} \wedge d\overline{z}_{i_1} \wedge \dots \wedge
d\overline{z}_{i_{p-1}}
\end{split}
\end{equation*}
and
\begin{equation*}
\Phi _0\wedge \psi ^{\sigma } = \sum _{1\leq i_1 < \dots < i_{p-1}
\leq m} \left( \sum _{k=1}^{m}\beta _k \psi ^{\sigma }_
{i_1\dots i_{p-1}}d\overline{z}_k\right) \wedge
d\overline{z}_{i_1} \wedge \dots \wedge d\overline{z}_{i_{p-1}},
\end{equation*}
we have
\begin{equation}
\frac{\partial \psi ^{\sigma }_{i_1\dots i_{p-1}}}
{\partial \overline{z}_{j(\sigma )}} + \beta _{j(\sigma )}
\psi ^{\sigma }_{i_1 \dots i_{p-1}} =
\phi ^{\sigma }_{j(\sigma )i_1 \dots i_{p-1}}
\end{equation}
and
\begin{equation}
\frac{\partial \psi ^{\sigma }_{i_1 \dots i_{p-1}}}
{\partial \overline{z}_j} + \beta _j\psi ^{\sigma }_{i_1\dots i_{p-1}}
= \phi ^{\sigma }_{ji_1\dots i_{p-1}}
\end{equation}
for $j \notin \{ j(\sigma ), i_1, \dots ,i_{p-1}\}$.
It follows from (5.6) that (7.10) and (7.11) are equivalent to
\begin{equation}
(\widetilde{K}_{\sigma ,j(\sigma )} + \beta _{j(\sigma )})
\psi ^{\sigma }_{i_1\dots i_{p-1}} =
\phi ^{\sigma }_{j(\sigma )i_1\dots i_{p-1}}
\end{equation}
and
\begin{equation}
(\widetilde{K}_{\sigma ,j} + \beta _{j})
\psi ^{\sigma }_{i_1\dots i_{p-1}} =
\phi ^{\sigma }_{ji_1\dots i_{p-1}}
\end{equation}
for $j \notin \{ j(\sigma ), i_1, \dots ,i_{p-1}\}$, respectively.
Then for any $\sigma \in Z$ and any $i_1, \dots ,i_{p-1}$ with
$1\leq i_1 < \dots < i_{p-1} \leq m$ we set
\begin{equation}
\psi ^{\sigma }_{i_1\dots i_{p-1}} :=
\frac{\phi ^{\sigma }_{j(\sigma )i_1\dots i_{p-1}}}
{\widetilde{K}_{\sigma ,j(\sigma )} + \beta _{j(\sigma )}}.
\end{equation}
We note that $\{ \psi ^{\sigma }_{i_1\dots i_{p-1}}\}$ satisfy
(7.13) for $\{ \phi ^{\sigma }_{ji_1\dots i_{p-1}}\}$ have
the property (7.7). We define $\psi ^{\sigma }$ by (7.9). We
consider the formal sum
\begin{equation}
\psi := \sum _{\sigma \in Z} \psi ^{\sigma }.
\end{equation}
Then, if $Z = {\mathbb Z}^{n+m}$, then we have
\begin{equation}
\phi = \Phi _0 \wedge \psi + \overline{\partial }_1\psi
\end{equation}
and if $Z = {\mathbb Z}^{n+m} \setminus \{ \sigma _0\}$, then we have
\begin{equation}
\phi = \Phi _0 \wedge \psi + \overline{\partial }_1\psi
+ \phi ^{\sigma _0},
\end{equation}
where
$$\phi ^{\sigma _0} = {\bf e}(\langle \sigma _0,t'\rangle + \sqrt{-1}
\langle \sigma _0'',t''\rangle ) \phi ^{\sigma _0}_c.$$

\section{Conditions}
We define $d(L) \in {\mathbb C}^m$ for any homogeneous line
bundle $L$ over a toroidal group $X$ by
$$d(L) := \sqrt{-1}(\overline{\alpha }_1,\dots ,\overline{\alpha }_m)
C^{-1}_1.$$
Then we have
\begin{equation}
(\widetilde{K}_{\sigma ,1}+\beta _1, \dots ,
\widetilde{K}_{\sigma ,m}+\beta _m) = \pi
(K_{\sigma } + d(L))C_1.
\end{equation}
Noting $K_{\sigma } + d(L) = - \sigma ''' + d(L) \not= 0$
for any $\sigma \in Z$ with $(\sigma ',\sigma '') = (0,0)$,
we set
$$m_0 := \min \{ \| - \sigma ''' + d(L)\| ;
(0,0,\sigma ''') \in Z \}.$$
Then we have
\begin{equation}
m_0 \leq \| K_{\sigma } + d(L)\|
\end{equation}
for any $\sigma = (0,0,\sigma ''') \in Z$. We consider the following
condition $(H)_S$:\\
$(H)_S$ There exists $r>0$ such that
$$\| K_{\sigma } + d(L)\| \geq r^{-|(\sigma ', \sigma '')|}$$
for all $\sigma \in Z$ with $(\sigma ',\sigma '') \not= (0,0)$.

\begin{lemma}
The condition $(H)_S$ is equivalent to the following condition
$(H)'_S$:\\
$(H)'_S$ There exist constants $C>0$ and $a\geq 0$ such that
$$\| K_{\sigma } + d(L)\| \geq C \exp (-a |(\sigma ',\sigma '')|)$$
for all $\sigma \in Z$ with $(\sigma ',\sigma '') \not= (0,0)$.
\end{lemma}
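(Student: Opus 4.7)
The plan is to reduce both implications to the elementary identity $r^{-k}=\exp(-k\log r)$, exploiting the fact that $|(\sigma',\sigma'')|\geq 1$ whenever $\sigma\in Z$ satisfies $(\sigma',\sigma'')\neq(0,0)$, since $(\sigma',\sigma'')$ is then a nonzero integer vector.

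For the direction $(H)_S\Rightarrow(H)'_S$, I would take $C:=1$ and $a:=\max(0,\log r)\geq 0$. When $r\geq 1$ this gives $r^{-|(\sigma',\sigma'')|}=\exp(-a|(\sigma',\sigma'')|)$. When $0<r<1$, I have $a=0$ and $r^{-|(\sigma',\sigma'')|}\geq 1=Ce^{-a|(\sigma',\sigma'')|}$; in either case the lower bound required by $(H)'_S$ follows directly from $(H)_S$.

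For the converse $(H)'_S\Rightarrow(H)_S$, given $C>0$ and $a\geq 0$ I would set $r:=e^a\max(1,C^{-1})$. It then suffices to verify that $r^{-k}\leq Ce^{-ak}$ for every integer $k\geq 1$, which is equivalent to $(re^{-a})^k\geq C^{-1}$. With this choice, $re^{-a}=\max(1,C^{-1})$, and a short check shows $C^{-1/k}\leq\max(1,C^{-1})$ in both the cases $C\geq 1$ and $C<1$; hence $(re^{-a})^k\geq(C^{-1/k})^k=C^{-1}$, as required.

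I do not expect any genuine obstacle: the lemma is purely a matter of bookkeeping, converting between two equivalent asymptotic descriptions of a geometric lower bound on $\|K_\sigma+d(L)\|$. The only mild care needed is the case distinction $r\geq 1$ versus $r<1$ (and symmetrically $C\geq 1$ versus $C<1$) when taking logarithms, to keep the derived constants $a$ and $r$ in their admissible ranges.
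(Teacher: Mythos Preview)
Your argument is correct and follows essentially the same approach as the paper: both directions reduce to the identity $r^{-k}=e^{-k\log r}$ together with $|(\sigma',\sigma'')|\geq 1$ for nonzero integer vectors. The only cosmetic difference is that the paper disposes of the case $r<1$ by first enlarging $r$ to be $\geq 1$, and in the converse chooses $\log r>\max\{a-\log C,\,a\}$ strictly, whereas you handle the same dichotomies by explicit case splits on $r\lessgtr 1$ and $C\lessgtr 1$.
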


\begin{proof}
Let $r>0$ be a constant for which the condition $(H)_S$ is satisfied.
For any $r'$ with $r' \geq r$ the condition $(H)_S$ holds.
Therefore we may assume $r\geq 1$. Then the condition $(H)'_S$
holds for $C=1$ and $a:=\log r$.\par
Conversely, we suppose that the condition $(H)'_S$ is satisfied for
some constants $C>0$ and $a\geq 0$. We take $r>0$ with
$$\log r > \max \{ a-\log C, a\}.$$
Then we have
$$r^{-\lambda } < C e^{-a\lambda }\quad (\lambda \geq 1).$$
Therefore we obtain
$$r^{-|(\sigma ',\sigma '')|} < C \exp (-a|(\sigma ',\sigma '')|)
\leq \| K_{\sigma } + d(L)\|$$
for any $\sigma \in Z$ with $(\sigma ',\sigma '') \not= (0,0)$.
\end{proof}

\begin{lemma}
The condition $(H)'_S$ is equivalent to the following
condition $(H)''_S$:\\
$(H)''_S$ There exist constants $C>0$ and $a\geq 0$ such that
$$\| K_{\sigma } + d(L)\| \geq C \exp (-a |\sigma ''|)$$
for all $\sigma \in Z$ with $(\sigma ',\sigma '') \not= (0,0)$.
\end{lemma}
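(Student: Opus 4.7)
The plan is to show the two directions separately, with one being trivial and the other requiring a case split driven by the structure of the period matrix.

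For the easy implication $(H)''_S \Rightarrow (H)'_S$, I simply note that $|\sigma''|\leq|(\sigma',\sigma'')|$, and since $a\geq 0$ we have $e^{-a|\sigma''|}\geq e^{-a|(\sigma',\sigma'')|}$; therefore the same constants $C,a$ that work for $(H)''_S$ work for $(H)'_S$.

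For the harder implication $(H)'_S\Rightarrow(H)''_S$, the key resource is that $\det(\mathrm{Im}(S_1))\neq 0$, which gives a constant $c>0$ such that $\|v\,\mathrm{Im}(S_1)\|\geq c\|v\|$ for every $v\in\mathbb R^m$. From $K_\sigma=\sigma'S_1+\sigma''S_2-\sigma'''$ and $\sigma'''\in\mathbb Z^m\subset\mathbb R^m$, I would take imaginary parts to obtain
\begin{equation*}
\|K_\sigma+d(L)\|\;\geq\;|\mathrm{Im}(K_\sigma+d(L))|\;\geq\;c|\sigma'|-c'|\sigma''|-c''
\end{equation*}
for constants $c',c''>0$ depending only on $S_2$ and $d(L)$. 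I now fix $K:=2c'/c$ and distinguish three cases.

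Case A ($|\sigma'|\leq K|\sigma''|$): then $|(\sigma',\sigma'')|\leq(K+1)|\sigma''|$, so $(H)'_S$ directly yields $\|K_\sigma+d(L)\|\geq C e^{-a(K+1)|\sigma''|}$, which is of the required form. Case B ($|\sigma'|>K|\sigma''|$ and $|\sigma'|\geq M$ with $M$ chosen so that $c|\sigma'|/4\geq c''$): the imaginary-part estimate reduces to $\|K_\sigma+d(L)\|\geq c|\sigma'|/4\geq c/4$, a uniform positive constant, which dominates any $C'e^{-a'|\sigma''|}$ with $C'\leq c/4$. Case C ($|\sigma'|>K|\sigma''|$ and $|\sigma'|<M$): then $|\sigma''|<M/K$, so $(\sigma',\sigma'')$ lies in a finite set; for each such pair, $\|K_\sigma+d(L)\|\to\infty$ as $|\sigma'''|\to\infty$, so only finitely many $\sigma\in Z$ with bounded $\|K_\sigma+d(L)\|$ arise, and their minimum is strictly positive because $Z$ excludes the unique bad index $\sigma_0$ by Lemma 7.1. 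Taking $C'$ to be the minimum of the three thresholds and $a':=a(K+1)$ gives $(H)''_S$.

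The main obstacle is Case C together with the passage from Case B to Case C: handling the regime where $|\sigma'|$ dominates $|\sigma''|$ requires leaving the realm where $(H)'_S$ alone is strong enough, and it is essential that $\mathrm{Im}(S_1)$ is invertible so that $|\mathrm{Im}(K_\sigma+d(L))|$ grows linearly in $|\sigma'|$; without this structural input (from the first normal form of the period matrix) no uniform lower bound at $\sigma''=0$ would be possible.
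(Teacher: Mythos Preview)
Your proof is correct and rests on the same key observation as the paper: the invertibility of $\mathrm{Im}(S_1)$ lets one bound $|\sigma'|$ linearly against $|\sigma''|$ via the imaginary part of $K_\sigma+d(L)$. The organization, however, differs. The paper avoids your three-way split by first disposing of all $\sigma$ with $\|K_\sigma+d(L)\|>1$ (where any exponential bound is trivial) and then, on the remaining set $\Sigma=\{\sigma\in Z:\|K_\sigma+d(L)\|\leq 1\}$, deriving directly that $|\sigma'|\leq\gamma_1'+\gamma_2'|\sigma''|$ from the same imaginary-part estimate you use; this single inequality, substituted into $(H)'_S$, yields $(H)''_S$ in one stroke. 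In effect the paper's set $\Sigma$ absorbs your Cases B and C simultaneously (since large $\|K_\sigma+d(L)\|$ covers both the large-$|\sigma'|$ regime and the finite exceptional set), so only the analogue of your Case A remains. Your route is a bit more laborious---Case C in particular requires the separate finiteness and positivity argument invoking Lemma~7.1---but it reaches the same conclusion by the same mechanism.
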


\begin{proof}
Since
$$\exp (-a|(\sigma ',\sigma '')|) \leq \exp (-a|\sigma ''|),$$
the condition $(H)''_S$ obviously implies the condition $(H)'_S$.\par
We suppose that the condition $(H)'_S$ is satisfied. It is
trivial that we can take constants $C>0$ and $a\geq 0$ satisfying
the inequality in $(H)''_S$ for any $\sigma \in Z$ with
$(\sigma ',\sigma '') \not= (0,0)$ and
$\| K_{\sigma } + d(L)\| > 1$. Therefore, it suffices to consider
the condition $(H)''_S$ for the following set
$$\Sigma := \{ \sigma \in Z ; (\sigma ',\sigma '') \not= (0,0),
\| K_{\sigma } + d(L)\| \leq 1\}.$$
We write $d(L) = (d(L)_1, \dots ,d(L)_m) \in {\mathbb C}^m$.
Let $\sigma \in \Sigma $. For any $\ell = 1, \dots ,m$ we have
\begin{equation}
\begin{split}
\left| \sum _{j=1}^m \sigma _j {\rm Im}(s_{j\ell })\right|
& \leq \left| \sum _{j=1}^m \sigma _j s_{j\ell } - \sigma _{n+\ell }
\right| \\
& \leq \left| \sum _{j=1}^n \sigma _j s_{j\ell } - \sigma _{n+\ell }
+ d(L)_{\ell }\right| + |d(L)_{\ell }| +
\left| \sum _{j=m+1}^n\sigma _j s_{j\ell }\right| \\
& \leq \| K_{\sigma } + d(L) \| + \| d(L) \| +
\sum _{j=m+1}^n |s_{j\ell }| |\sigma _j| \\
&\leq 1 + \| d(L) \| + \gamma _1 |\sigma ''|,\\
\end{split}
\end{equation}
where we set
$$\gamma _1 := \max \{ |s_{j\ell }| ; 1 \leq j \leq n,
1 \leq \ell \leq m \}.$$
Since $\det ({\rm Im}(S_1)) \not= 0$, there exists $\gamma _2 > 0$
such that
\begin{equation}
\max \left\{ \left| \sum _{j=1}^m \sigma _j {\rm Im}(s_{j\ell })
\right| ; \ell = 1, \dots ,m \right\} \geq \gamma _2
\sum _{j=1}^m |\sigma _j|.
\end{equation}
By (8.3) and (8.4) we can take constants $\gamma '_1 > 0$ and
$\gamma '_2 > 0$ such that
$$\sum _{j=1}^m |\sigma _j| \leq \gamma '_1 + \gamma '_2
\sum _{j=m+1}^n |\sigma _j|.$$
Let $C > 0$ and $a \geq 0$ be constants satisfying the condition
$(H)'_S$. Then we obtain
\begin{equation*}
\begin{split}
\| K_{\sigma } + d(L)\| & \geq
C \exp (-a |(\sigma ',\sigma '')|)\\
& = C \exp \left( - a \sum _{i=1}^m |\sigma _i|\right)
\exp \left( - a \sum _{j=m+1}^n |\sigma _j|\right)\\
& \geq C \exp (- a \gamma '_1) \exp \left(
- a(1 + \gamma '_2) \sum _{j=m+1}^n |\sigma _j|\right)\\
\end{split}
\end{equation*}
for $\sigma \in \Sigma $. This finishes the proof.
\end{proof}

\section{Cohomology groups}

The following theorem is our main result.

\begin{theorem}
Let $L \longrightarrow X$ be a homogeneous line bundle over a
toroidal group $X = {\mathbb C}^n/\Gamma $ with
${\rm rank}\,\Gamma = n+m$. We assume that $L$ is not analytically
trivial. We consider the condition $(H)_S$ in Section 8 and
the set $Z$ defined after Lemma 7.1.
Then one of the following cases holds.\\                                   
(I) The case  that the condition $(H)_S$ is satisfied for $d(L)$.
\\
(i) If $Z = {\mathbb Z}^{n+m}$, then
$H^p(X,{\mathcal O}(L)) = 0$ for $p \geq 1$.\\
(ii) If $Z = {\mathbb Z}^{n+m} \setminus \{ \sigma _0\}$, then
\begin{equation*}
H^p(X,{\mathcal O}(L)) \cong H^p({\mathbb T},{\mathcal O})\quad
\text{for $p \geq 1$},
\end{equation*}
where ${\mathbb T}$ is an $m$-dimensional complex torus over which
$X$ is a principal $({\mathbb C}^{*})^{n-m}$-bundle.\\
(II) The case  that the condition $(H)_S$ is not satisfied for $d(L)$.\\
For any $p$ with
$1 \leq p \leq m$, $H^p(X,{\mathcal O}(L))$ is a non-Hausdorff
Fr\'echet space, then, of infinite dimension.
\end{theorem}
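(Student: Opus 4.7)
The overall plan is to reduce the cohomology computation, via the Dolbeault-Kazama isomorphism (4.2) and the substitution $\varphi = G\phi$ of (6.3), to the analytic question of whether the formal Fourier series $\psi = \sum_{\sigma \in Z} \psi^{\sigma}$ constructed in (7.14)-(7.15) actually lies in $H^0(X, \mathcal{F}^{0,p-1})$ in the sense of Lemma 5.2. By (8.1) the denominator $\widetilde{K}_{\sigma,j(\sigma)} + \beta_{j(\sigma)}$ is essentially $\pi \|K_{\sigma} + d(L)\|$ up to the invertible factor $C_1$, so everything hinges on the size of $\|K_{\sigma} + d(L)\|$, and $(H)_S$ via its equivalents $(H)'_S$ and $(H)''_S$ from Lemmas 8.1 and 8.2 is the right quantitative input.

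In Case (I)(i), given $\varphi \in Z_{\overline{\partial}_1}(X, \mathcal{F}^{0,p}(L))$, write $\varphi = G\phi$ with $\phi$ satisfying (6.7) and form $\psi$ by (7.14). The Fourier coefficients of $\phi$ satisfy Lemma 5.2's rapid-decay bound for every $R > 0$ and every $k > 0$, and $(H)''_S$ forces $|\widetilde{K}_{\sigma,j(\sigma)} + \beta_{j(\sigma)}| \geq C\exp(-a|\sigma''|)$ for fixed $C > 0$ and $a \geq 0$. Hence the coefficients of $\psi$ are bounded by those of $\phi$ times $C^{-1}\exp(a|\sigma''|)$, and the extra exponential factor is absorbed into Lemma 5.2 by replacing $R$ with $Re^a$. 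Thus $\psi \in H^0(X, \mathcal{F}^{0,p-1})$, so $\overline{\partial}_1(G\psi) = \varphi$ by (7.16) and (6.8), and $H^p(X, \mathcal{O}(L)) = 0$. In Case (I)(ii), the same convergence argument yields $\psi$, but (7.17) leaves a residual mode $\phi^{\sigma_0} = {\bf e}(\langle \sigma_0, t'\rangle + \sqrt{-1}\langle \sigma_0'', t''\rangle)\omega$ for some constant-coefficient $(0,p)$-form $\omega$. A direct calculation shows that $\widetilde{K}_{\sigma_0, k} + \beta_k = 0$ for every $k$ makes such $\phi^{\sigma_0}$ automatically closed for every $\omega$, while the analogous vanishing in (7.12)-(7.13) forces $\omega = 0$ whenever $G\phi^{\sigma_0}$ is exact. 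Hence the residue map $\omega \mapsto [G\phi^{\sigma_0}]$ is a linear isomorphism from the $\binom{m}{p}$-dimensional space of constant $(0,p)$-forms onto $H^p(X, \mathcal{O}(L))$, which matches (under the identification with the constant/harmonic representatives on $\mathbb{T}$) both the dimension and the structure of $H^p(\mathbb{T}, \mathcal{O})$.

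For Case (II), the negation of $(H)_S$ supplies a sequence $\{\sigma^{(k)}\} \subset Z$ with $(\sigma^{(k)\prime}, \sigma^{(k)\prime\prime}) \neq (0,0)$ and $\|K_{\sigma^{(k)}} + d(L)\| < k^{-|(\sigma^{(k)\prime}, \sigma^{(k)\prime\prime})|}$ for every $k$. I plan to construct, for each $1 \leq p \leq m$, a closed $\phi \in H^0(X, \mathcal{F}^{0,p})$ concentrated on these frequencies, satisfying (7.7)-(7.8), whose formal primitive (7.14) has Fourier coefficients growing so rapidly that Lemma 5.2 fails in at least one seminorm; such $\phi$ defines a non-trivial class. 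Because the truncations $\phi_N = \sum_{k \leq N}\phi^{\sigma^{(k)}}$ are coboundaries (their finite formal primitives trivially converge), $\phi = \lim_N \phi_N$ lies in $\overline{B_{\overline{\partial}_1}} \setminus B_{\overline{\partial}_1}$, giving non-Hausdorffness. Independent choices of admissible coefficients at distinct bad frequencies then produce infinitely many linearly independent cohomology classes.

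The main obstacle is the construction in Case (II): the summands $\phi^{\sigma^{(k)}}$ must simultaneously satisfy the closure relations (7.7)-(7.8), carry enough Fourier mass on the resonant frequencies for the formal primitive to diverge in a specific seminorm of $H^0(X, \mathcal{F}^{0,p-1})$, yet decay fast enough in $\sigma$ for $\phi$ itself to lie in $H^0(X, \mathcal{F}^{0,p})$ per Lemma 5.2. The delicate point is choosing the coefficients $c_k$ with precisely the right decay rate so that Lemma 5.2 is satisfied for $\phi$ but violated for the formal $\psi$, the gap being supplied by the anomalously small denominators. A secondary subtlety is verifying, in Case (I)(ii), that the residue construction provides a topological (not merely set-theoretic) identification of $H^p(X,\mathcal{O}(L))$ with $H^p(\mathbb{T},\mathcal{O})$; this in turn requires the $(H)_S$-estimate of Case (I)(i) to show that $B_{\overline{\partial}_1}$ is closed so that the quotient is a Hausdorff (finite-dimensional) Fr\'echet space.
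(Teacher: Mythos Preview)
Your proposal is correct and takes essentially the same approach as the paper's proof: the Dolbeault--Kazama reduction, the $G$-conjugation of Section~6, the formal Fourier primitive (7.14)--(7.15), and the convergence/divergence dichotomy governed by $(H)_S$ and its equivalents. Two small refinements worth noting: in Case~(I) the estimate $(H)''_S$ is vacuous when $(\sigma',\sigma'')=(0,0)$, and the paper handles those modes separately via the constant $m_0$ of (8.2); in Case~(II) the paper negates $(H)''_S$ rather than $(H)_S$, obtaining $\|K_{\sigma(\nu)}+d(L)\| < \nu^{-1}\exp(-\nu|\sigma(\nu)''|)$, which meshes directly with the $|\sigma''|$-weighting of Lemma~5.1 and thereby resolves exactly the coefficient-balancing obstacle you flag.
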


We need the following lemma for the proof of Theorem 9.1.

\begin{lemma}
Under the assumption of Theorem 9.1, 
$B_{\overline{\partial }_1}(X,{\mathcal F}^{0,p}(L))$ is a closed
subspace of $Z_{\overline{\partial }_1}(X,{\mathcal F}^{0,p}(L))$ 
if and only if the space
$$\{ \Phi _0 \wedge \psi + \overline{\partial }_1\psi ;
\psi \in H^0(X,{\mathcal F}^{0,p-1})\}$$
is a closed subspace of
$$\{ \phi \in H^0(X,{\mathcal F}^{0,p}) ;
\Phi _0 \wedge \phi + \overline{\partial }_1 \phi = 0 \}.$$
\end{lemma}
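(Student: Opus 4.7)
The proof plan hinges on transferring everything through the Fr\'echet space isomorphism established in Section 6. Recall that multiplication by $F(t) = \mathbf{e}(a(t))$ defines a Fr\'echet isomorphism
\[
F : H^0(X,\mathcal{F}^{0,p}(L)) \longrightarrow H^0(X,\mathcal{F}^{0,p}), \qquad \varphi = G\phi \longmapsto \phi,
\]
with inverse given by multiplication by $G = F^{-1}$. The key identity (6.6), namely $\overline{\partial}_1\varphi = G(\Phi_0 \wedge \phi + \overline{\partial}_1\phi)$, controls how this isomorphism interacts with the differential operator $\overline{\partial}_1$.

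First I would verify that under the isomorphism $F$, the subspace $Z_{\overline{\partial}_1}(X,\mathcal{F}^{0,p}(L))$ is mapped onto $\{\phi \in H^0(X,\mathcal{F}^{0,p}) ; \Phi_0 \wedge \phi + \overline{\partial}_1\phi = 0\}$. This is immediate from (6.6): since $G$ is nowhere vanishing, $\overline{\partial}_1\varphi = 0$ if and only if $\Phi_0 \wedge \phi + \overline{\partial}_1\phi = 0$, as already noted after (6.7). Next I would verify that $F$ maps $B_{\overline{\partial}_1}(X,\mathcal{F}^{0,p}(L))$ onto $\{\Phi_0 \wedge \psi + \overline{\partial}_1\psi ; \psi \in H^0(X,\mathcal{F}^{0,p-1})\}$. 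Writing $\eta = G\psi$ for the primitive of $\varphi = G\phi$, equation (6.8) shows that $\varphi$ lies in the image of $\overline{\partial}_1$ if and only if $\phi$ is of the stated form, and conversely every $\psi \in H^0(X,\mathcal{F}^{0,p-1})$ yields a legitimate $\eta = G\psi \in H^0(X,\mathcal{F}^{0,p-1}(L))$.

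Once these two set-theoretic identifications are in place, the lemma follows from the general fact that a Fr\'echet space isomorphism between subspaces preserves the property of being a closed subspace. More precisely, the restriction of $F$ to $Z_{\overline{\partial}_1}(X,\mathcal{F}^{0,p}(L))$ is a homeomorphism onto $\{\phi ; \Phi_0 \wedge \phi + \overline{\partial}_1\phi = 0\}$ (both with the subspace topologies inherited from the respective ambient Fr\'echet spaces), and under this homeomorphism $B_{\overline{\partial}_1}(X,\mathcal{F}^{0,p}(L))$ corresponds exactly to $\{\Phi_0 \wedge \psi + \overline{\partial}_1\psi ; \psi \in H^0(X,\mathcal{F}^{0,p-1})\}$. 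Hence one is closed in its ambient space if and only if the other is.

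There is no real obstacle: the entire content is bookkeeping about the Fr\'echet isomorphism $F$ and its compatibility with the twisted $\overline{\partial}_1$-operator. The only point requiring a moment of care is to remark that the subspace topologies match under $F$, which is automatic because $F$ is a global Fr\'echet isomorphism on the ambient space and closedness of both $Z_{\overline{\partial}_1}$-type and $\Phi_0 \wedge \cdot + \overline{\partial}_1(\cdot) = 0$-type subspaces in the corresponding Fr\'echet spaces is clear from continuity of $\overline{\partial}_1$ and of wedge product with $\Phi_0$.
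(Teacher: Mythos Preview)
Your proposal is correct and follows essentially the same approach as the paper: both arguments transfer the question through the Fr\'echet isomorphism given by multiplication by $F$ (equivalently $G$), using (6.6) to match $Z_{\overline{\partial}_1}$ with the kernel of $\Phi_0\wedge\cdot + \overline{\partial}_1$ and $B_{\overline{\partial}_1}$ with its image. The only cosmetic difference is that the paper phrases the conclusion via sequences (convergence of $\overline{\partial}_1\varphi^{(k)}$ versus convergence of $\Phi_0\wedge\phi^{(k)}+\overline{\partial}_1\phi^{(k)}$), whereas you invoke the abstract fact that a homeomorphism preserves closedness of subspaces; these are the same argument.
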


\begin{proof}
For any sequence $\{ \varphi ^{(k)}\}$ in
$H^0(X,{\mathcal F}^{0,p-1}(L))$ there exists uniquely a
sequence $\{ \phi ^{(k)}\}$ in $H^0(X,{\mathcal F}^{0,p-1})$ such
that $\varphi ^{(k)} = G\phi ^{(k)}$ and
$$\overline{\partial }_1\varphi ^{(k)} = G\left(
\Phi _0 \wedge \phi ^{(k)} + \overline{\partial }_1
\phi ^{(k)}\right).$$
Then $\{ \overline{\partial }_1 \varphi ^{(k)}\}$ converges to
some $\eta \in Z_{\overline{\partial }_1}(X,{\mathcal F}^{0,p}(L))$
if and only if
$\{ \Phi _0 \wedge \phi ^{(k)} + \overline{\partial }_1
\phi ^{(k)}\}$ converges to $\tau $ with $\eta = G\tau $.
Therefore there exists $\varphi \in H^0(X,{\mathcal F}^{0,p-1}(L))$
with $\eta = \overline{\partial }_1\varphi $ if and only if
there exists $\phi \in H^0(X,{\mathcal F}^{0,p-1})$ such that
$$\tau = \Phi _0 \wedge \phi + \overline{\partial }_1\phi .$$
\end{proof}

\noindent
{\it Proof of Theorem 9.1.}
Take any $\phi \in H^0(X,{\mathcal F}^{0,p})$ satisfying (6.7).
Then we have the formal solution $\psi = \sum _{\sigma \in Z}
\psi ^{\sigma }$ of (7.16) or (7.17), where $\psi ^{\sigma }$ has the
expression (7.9).\par
(I) We assume that the condition $(H)_S$ is satisfied. By the
definition of $j(\sigma )$ we have
$$\left\| \widetilde{K}_{\sigma } + (\beta _1,\dots ,\beta _m)
\right\| \leq \sqrt{m} \left| \widetilde{K}_{\sigma ,j(\sigma )} +
\beta _{j(\sigma )}\right|.$$
There exists a positive constant $M$ such that
$$\| K_{\sigma } + d(L)\| \leq M 
\left\| \widetilde{K}_{\sigma } + (\beta _1,\dots ,\beta _m)
\right\|
$$
by (8.1). Then we obtain
\begin{equation}
\left| \widetilde{K}_{\sigma ,j(\sigma )} + \beta _{j(\sigma )}
\right| \geq \frac{1}{\sqrt{m}M} \| K_{\sigma } + d(L)\|.
\end{equation}
By Lemmas 8.1 and 8.2 we can take $C > 0$ and $a \geq 0$ such that
\begin{equation}
\| K_{\sigma } + d(L)\| \geq C \exp (- a |\sigma ''|)
\end{equation}
for any $\sigma \in Z$ with $(\sigma ',\sigma '') \not= (0,0)$.
Since $\psi ^{\sigma }_{i_1 \dots i_{p-1}}$ is defined by
(7.14), it follows from (9.1) and (9.2) that
$$\left| \psi ^{\sigma }_{i_1\dots i_{p-1}}\right| \leq
\frac{\sqrt{m}M}{C} \exp (a|\sigma ''|) \left| \phi ^{\sigma }_
{j(\sigma )i_1\dots i_{p-1}} \right|$$
for any $\sigma \in Z$ with $(\sigma ',\sigma '') \not= (0,0)$.
Moreover we have
$$\left| \psi ^{\sigma }_{i_1\dots i_{p-1}}\right| \leq
\frac{\sqrt{m}M}{m_0} \left| \phi ^{\sigma }_{j(\sigma )i_1\dots i_{p-1}}
\right|$$
for any $\sigma \in Z$ with $(\sigma ',\sigma '') = (0,0)$ by (8.2).
Therefore the series $\psi = \sum _{\sigma \in Z}\psi ^{\sigma }$
converges by Lemma 5.1.\par
If $Z = {\mathbb Z}^{n+m} \setminus \{ \sigma _0\}$, then we have
$$\phi = \Phi _0 \wedge \psi + \overline{\partial }_1\psi
+ \phi ^{\sigma _0}.$$
Therefore $\varphi = G\phi $ is $\overline{\partial }_1$-cohomologous
to $G\phi ^{\sigma _0}$. Since
$$G\phi ^{\sigma _0} = G{\bf e}(\langle \sigma _0,t'\rangle + \sqrt{-1}
\langle \sigma _0'',t''\rangle ) \phi ^{\sigma _0}_{c},$$
we obtain
$$H^p(X,{\mathcal O}(L)) \cong H^p({\mathbb T},{\mathcal O}).$$
\par
(II) Suppose that the condition $(H)_S$ is not satisfied, but
$H^p(X,{\mathcal O}(L))$ is a Hausdorff space for some $p$ with
$1\leq p \leq m$. Then $B_{\overline{\partial }_1}(X,{\mathcal F}^
{0,p}(L))$ is a closed subspace of $Z_{\overline{\partial }_1}
(X,{\mathcal F}^{0,p}(L))$. By Lemmas 8.1 and 8.2 the condition
$(H)''_S$ is not satisfied. Then, for any $\nu \in {\mathbb N}$
there exists $\sigma (\nu ) = (\sigma (\nu )',\sigma (\nu )'',
\sigma (\nu )''') \in Z$ with $(\sigma (\nu )',\sigma (\nu )'')
\not= (0,0)$ such that
$$\| K_{\sigma (\nu )} + d(L)\| < \frac{1}{\nu }
\exp ( - \nu |\sigma (\nu )''|).$$
We set
\begin{equation*}
\delta ^{\sigma } :=
\begin{cases}
\frac{\exp (- \nu |\sigma (\nu )''|)}
{\widetilde{K}_{\sigma (\nu ),j(\sigma (\nu ))} +
\beta _{j(\sigma (\nu ))}} & \text{if $\sigma = \sigma (\nu )$
for some $\nu $},\\
0 & \text{otherwise.}\\
\end{cases}
\end{equation*}
For any $\sigma \in Z$ we take $i_1,\dots ,i_{p-1}$ with
$1\leq i_1 < \dots < i_{p-1}\leq m$ such that
$j(\sigma ) \notin \{ i_1, \dots ,i_{p-1}\}$, and we define 
$$\psi ^{\sigma } := \psi ^{\sigma }_{i_1\dots i_{p-1}}
d\overline{z}_{i_1} \wedge \dots \wedge d\overline{z}_{i_{p-1}},$$
where
$$\psi ^{\sigma }_{i_1\dots i_{p-1}} = \delta ^{\sigma }
{\bf e}(\langle \sigma ,t'\rangle + \sqrt{-1}\langle \sigma '', t''\rangle ).$$
We consider the formal sum $\psi := \sum _{\sigma \in Z}\psi ^{\sigma }$.
Since
$$\frac{\exp (- \nu |\sigma (\nu )''|)}
{\left| \widetilde{K}_{\sigma (\nu ),j(\sigma (\nu ))} +
\beta _{j(\sigma (\nu ))}\right|} > \nu ,$$
$\psi $ does not converge. On the other hand, we have
\begin{equation*}
\begin{split}
\lefteqn{\Phi _0 \wedge \psi ^{\sigma } + \overline{\partial }_1
\psi ^{\sigma }}\\
& = \left( \widetilde{K}_{\sigma ,j(\sigma )} + \beta _{j(\sigma )}
\right) \psi ^{\sigma }_{i_1\dots i_{p-1}} d\overline{z}_{j(\sigma )}
\wedge d\overline{z}_{i_1} \wedge \dots \wedge
d\overline{z}_{i_{p-1}}\\
& \quad +
\sum _{j \notin \{ i_1,\dots ,i_{p-1}\}}
\left( \widetilde{K}_{\sigma ,j} + \beta _j\right)
\psi ^{\sigma }_{i_1\dots i_{p-1}} d\overline{z}_j \wedge
d\overline{z}_{i_1} \wedge \dots \wedge d\overline{z}_{i_{p-1}}.
\end{split}
\end{equation*}
Since
\begin{equation*}
\begin{split}
\left| \left( \widetilde{K}_{\sigma (\nu ),j} + \beta _j\right)
\delta ^{\sigma (\nu )}\right| &\leq
\left| \left( \widetilde{K}_{\sigma (\nu ),j(\sigma (\nu ))} + 
\beta _{j(\sigma (\nu ))}\right)
\delta ^{\sigma (\nu )}\right| \\
& = \exp ( - \nu |\sigma (\nu )''|),\\
\end{split}
\end{equation*}
$\sum _{\sigma \in Z}(\Phi _0 \wedge \psi ^{\sigma } +
\overline{\partial }_1\psi ^{\sigma })$ converges. By Lemma 9.2 there
exists $\lambda \in H^0(X,{\mathcal F}^{0,p-1})$ such that
if we set
$$\phi := \sum _{\sigma \in Z} (\Phi _0 \wedge \psi ^{\sigma } +
\overline{\partial }_1\psi ^{\sigma }),$$
then $\phi = \Phi _0 \wedge \lambda + \overline{\partial }_1\lambda $.
We express $\phi $ as in (7.4) and (7.5). We expand $\lambda $ as
$\lambda = \sum _{\sigma \in {\mathbb Z}^{n+m}}\lambda ^{\sigma }$,
where
$$\lambda ^{\sigma } = \sum _{1\leq i_1 < \dots < i_{p-1}\leq m}
\lambda ^{\sigma }_{i_1 \dots i_{p-1}}d\overline{z}_{i_1}
\wedge \dots \wedge d\overline{z}_{i_{p-1}},$$
$$\lambda ^{\sigma }_{i_1 \dots i_{p-1}} =
b^{\sigma }_{i_1\dots i_{p-1}} {\bf e}(\langle \sigma ,t'\rangle + \sqrt{-1}
\langle \sigma '',t''\rangle ),\quad
b^{\sigma }_{i_1\dots i_{p-1}}\in {\mathbb C}.$$
For any $\sigma \in Z$ we have
\begin{equation}
\Phi _0 \wedge \psi ^{\sigma } + \overline{\partial }_1
\psi ^{\sigma } = \Phi _0 \wedge \lambda ^{\sigma } +
\overline{\partial }_1\lambda ^{\sigma }.
\end{equation}
We also have
\begin{equation*}
\begin{split}
\lefteqn{\Phi _0 \wedge \lambda ^{\sigma } +
\overline{\partial }_1\lambda ^{\sigma }}\\
& =
\sum _{1\leq i_1 < \dots < i_{p-1} \leq m}
\left\{ \left( \widetilde{K}_{\sigma ,j(\sigma )} +
\beta _{j(\sigma )}\right) \lambda ^{\sigma }_{i_1\dots i_{p-1}}
d\overline{z}_{j(\sigma )}\right.\\
& \quad \hspace{1.5cm} + \sum _{j \notin \{ j(\sigma ),i_1,\dots ,i_{p-1}\}}
\left. \left( \widetilde{K}_{\sigma ,j} +
\beta _{j}\right) \lambda ^{\sigma }_{i_1\dots i_{p-1}}
d\overline{z}_{j}\right\}
 \wedge d\overline{z}_{i_1} \wedge \dots \wedge
d\overline{z}_{i_{p-1}}.\\
\end{split}
\end{equation*}
Comparing the coefficients of 
$d\overline{z }_{j(\sigma )}\wedge d\overline{z}_{i_1}
\wedge \dots \wedge d\overline{z}_{i_{p-1}}$ in both sides of
(9.3), we obtain
\begin{equation*}
\begin{split}
\lefteqn{ \left( \widetilde{K}_{\sigma ,j(\sigma )} +
\beta _{j(\sigma )}\right)\delta ^{\sigma }}\\
& =
\left( \widetilde{K}_{\sigma ,j(\sigma )} +
\beta _{j(\sigma )}\right)b^{\sigma }_{i_1\dots i_{p-1}}
+ \sum _{k=1}^{p-1} (-1)^{k}
\left( \widetilde{K}_{\sigma ,i_k} +
\beta _{i_k}\right)
b^{\sigma }_{i_1\dots i_{k-1}j(\sigma )i_{k+1}\dots i_{p-1}}.\\
\end{split}
\end{equation*}
Then we have
$$\delta ^{\sigma } = b^{\sigma }_{i_1\dots i_{p-1}} +
\sum _{k=1}^{p-1} (-1)^k \frac{\widetilde{K}_{\sigma ,i_k}+
\beta _{i_k}}{\widetilde{K}_{\sigma ,j(\sigma )} + \beta _{j(\sigma )}}
b^{\sigma }_{i_1\dots i_{k-1}j(\sigma )i_{k+1}\dots i_{p-1}}.$$
Since
$$\left|
\frac{\widetilde{K}_{\sigma ,i_k}+
\beta _{i_k}}{\widetilde{K}_{\sigma ,j(\sigma )} + \beta _{j(\sigma )}}
\right| \leq 1$$
and $\sum _{\sigma \in {\mathbb Z}^{n+m}}\lambda ^{\sigma }$ converges,
$\sum _{\sigma \in Z} \psi ^{\sigma }$ must converge.
This is a contradiction.
\hfill $\Box$

\section{Examples}

In this section we give examples which show that each of the cases
in Theorem 9.1 really occurs.\par
{\bf Example 10.1.}
Let $\Gamma $ be a discrete subgroup of rank 3 in ${\mathbb C}^2$
whose period matrix is
$$P =
\begin{pmatrix}
1 & 0 & \sqrt{-1} \alpha \\
0 & 1 & \sqrt{-1}\\
\end{pmatrix},
\quad \alpha \in {\mathbb R} \setminus {\mathbb Q}.$$
Then $X = {\mathbb C}^2/\Gamma $ is a toroidal group for the irrationality
condition (IS) is fulfilled. Let $z=(z_1,z_2)$ be standard
coordinates of ${\mathbb C}^2$ with respect to $P$.
We write $P = (e_1, e_2,s_1)$, where $s_1 = {}^t(s_{11}, s_{21})
= {}^t(\sqrt{-1}\alpha , \sqrt{-1})$. 
We set $s_2 = {}^t(s_{12}, s_{22}) := {}^t(0, \sqrt{-1})$.
Let $t = (t_1,t_2,t_3,t_4)$ be real coordinates of ${\mathbb C}^2$
with respect to $\{ e_1,e_2,s_1,s_2 \}$. In this case the
matrices $A,B$ and $C$ in Section 2 are as follows:
$$A = ({\rm Re}(s_{ij}))=0,\quad
B = ({\rm Im}(s_{ij})) =
\begin{pmatrix}
\alpha & 0 \\
1 & 1 \\
\end{pmatrix}
\quad \text{and}\quad
C = 
\begin{pmatrix}
\frac{1}{\alpha } & 0 \\
- \frac{1}{\alpha } & 1\\
\end{pmatrix}.$$
For any $\sigma = (\sigma _1,\sigma _2,\sigma _3) \in {\mathbb Z}^3$
we have
$$K_{\sigma ,1} = \sqrt{-1}(\sigma _1\alpha + \sigma _2)
- \sigma _3.$$
Furthermore we have
$K_{\sigma } = K_{\sigma ,1}$ and
$\widetilde{K}_{\sigma } = \widetilde{K}_{\sigma ,1} = \pi
K_{\sigma } \alpha $.\par
We consider a homomorphism $d : \Gamma \longrightarrow {\mathbb R}$
such that $d(e_1) = d(e_2) = 0$ and
$d(s_1) \in {\mathbb R} \setminus {\mathbb Z}$. Let $L$ be
the homogeneous line bundle over $X$ given by a representation
$\rho (\gamma ) = {\bf e}(d(\gamma ))$ $(\gamma \in \Gamma )$.
In this case we have $\alpha _1 = - \sqrt{-1}(d(s_1)/\alpha )$,
$\beta _1 = - (\pi d(s_1))/\alpha $ and $d(L) = - d(s_1)$.
Then we obtain
$$K_{\sigma } + d(L) = \sqrt{-1}(\sigma _1 \alpha + \sigma _2)
- (\sigma _3 + d(s_1)).$$
Since the equality $\widetilde{K}_{\sigma } + \beta _1 = 0$ is
equivalent to $K_{\sigma } + d(L) = 0$, we have $Z = {\mathbb Z}^3$.
We note
$$ |K_{\sigma } + d(L) | \geq |\sigma _1 \alpha + \sigma _2 |.$$
We assume that $\alpha $ is an algebraic number. By Liouville's
theorem we see at once that the condition $(H)_S$ is satisfied.
This is an example for the case (i) in (I) in Theorem 9.1.\par
{\bf Example 10.2.}
We consider a period matrix
$$P = 
\begin{pmatrix}
1 & 0 & \sqrt{-1}\\
0 & 1 & \alpha \\
\end{pmatrix},\quad
\alpha \in {\mathbb R}\setminus {\mathbb Q}.$$
It is easily seen that $X = {\mathbb C}^2/\Gamma $ is a toroidal group,
where $\Gamma $ is the discrete subgroup of ${\mathbb C}^2$ given by
$P$. As in Example 10.1 we write $P = (e_1, e_2, s_1)$.
Take a homomorphism $d : \Gamma \longrightarrow {\mathbb R}$ such that
$d(e_1) = d(e_2) = 0$ and $d(s_1) \in {\mathbb R} \setminus {\mathbb Z}$.
Then a representation $\rho = {\bf e}(d)$ of $\Gamma $ defines a
homogeneous line bundle $L$ over $X$. We obtain
$$K_{\sigma } + d(L) = \sqrt{-1}\sigma _1 + \sigma _2 \alpha
- \sigma _3 - d(s_1)$$
by a straight calculation.\par
We assume that $\alpha $ is an algebraic number. We set
$d(s_1) := \alpha $. By Liouville's theorem there exist 
$C > 0$ and $N \in {\mathbb N}$ such that
$$|q \alpha - p| \geq \frac{C}{|q|^N}$$
for $p,q \in {\mathbb Z}$ with $q \not= 0$. Then we obtain
\begin{equation*}
\begin{split}
|K_{\sigma } + d(L)| & \geq
|(\sigma _2 - 1)\alpha - \sigma _3|\\
& \geq
\frac{C}{|\sigma _2 - 1|^N}\\
& \geq
C e^{-N} \exp ( - N |\sigma _2|)
\end{split}
\end{equation*}
for any $\sigma \in {\mathbb Z}^3$ with $\sigma _2 \not= 1$.
This is the condition $(H)''_S$.
Moreover, for $\sigma _0=(0,1,0)$ we have
$$\widetilde{K}_{\sigma _0} + \beta _1 = \pi (K_{\sigma _0} +
d(L)) = 0.$$
\par
{\bf Example 10.3.}
The following example is a modification of Vogt's
example (\cite{vogt}). In Example 10.2 we set
$$\alpha = d(s_1) := \sum _{j=1}^{\infty }
10^{(-10^{j!})}.$$
For any $\nu \in {\mathbb N}$ we define
$$q_{\nu } := 10^{\nu !}10^{10^{\nu !}}\quad
\text{and}\quad
p_{\nu } := 10^{\nu !}10^{10^{\nu !}}\left(
\sum _{j=1}^{\nu } 10^{(-10^{j!})}\right).$$
Then there exists $C > 0$ such that
$$|q_{\nu } \alpha - p_{\nu }| \leq
C \exp ( - q_{\nu }^2)$$
for sufficiently large $\nu $. We set
$$\sigma ^{(\nu )} = (\sigma ^{(\nu )}_1, \sigma ^{(\nu )}_2,
\sigma ^{(\nu )}_3) := (0, q_{\nu } + 1, p_{\nu })$$
for $\nu \in {\mathbb N}$. We suppose that the condition
$(H)_S$ is satisfied. By Lemmas 8.1 and 8.2 we can take constants
$\widetilde{C} > 0$ and $\widetilde{a} \geq 0$ such that
$$|K_{\sigma } + d(L)| \geq \widetilde{C}
\exp ( - \widetilde{a} |\sigma _2|)$$
for $\sigma \in Z$ with $(\sigma _1,\sigma _2) \not= (0,0)$.
Especially the above inequality holds for $\sigma ^{(\nu )}$.
On the other hand, we have
$$
|K_{\sigma ^{(\nu )}} + d(L)| = |\sigma ^{(\nu )}_2\alpha
- \sigma ^{(\nu )}_3 - \alpha |
= |q_{\nu }\alpha - p_{\nu }|.
$$
Then we obtain
$$0 < \frac{\widetilde{C}}{C} \leq
\exp \left(- q_{\nu }^2 + \widetilde{a}(q_{\nu } + 1)\right)$$
for sufficiently large $\nu $, which is impossible.
Thus we conclude that the condition $(H)_S$ is not satisfied.





\flushleft{
Graduate School of Science and Engineering for Research\\
University of Toyama\\
Toyama 930-8555, Japan\\}

\noindent
e-mail: abe@sci.u-toyama.ac.jp\\

\end{document}